\newtheorem{theorem}{Theorem}[section]
\newtheorem{corollary}[theorem]{Corollary}
\newtheorem{definition}[theorem]{Definition}
\newtheorem{lemma}[theorem]{Lemma}
\newtheorem{problem}[theorem]{Problem}
\newtheorem{proposition}[theorem]{Proposition}
\def\defeq{\mathrel{\mathop:}=}
\begin{document}
	\title{On a numerical construction of doubly stochastic matrices with prescribed eigenvalues}
	\author{Kassem Rammal$^1$, Bassam Mourad$^2$, Hassane Abbas$^2$, Hassan Issa$^2$}
	\address{$^1$KALMA, Department of Mathematics, Faculty of Science, Lebanese University, Beirut, Lebanon}
\ead{kassem.rammal@hotmail.com }
	\address{$^2$Department of Mathematics, Faculty of Science, Lebanese University, Beirut, Lebanon}
\ead{bmourad@ul.edu.lb}
	\date{}
\begin{abstract}
We study the inverse eigenvalue problem for finding doubly stochastic matrices with specified eigenvalues. By making use of a combination of Dykstra's algorithm and an alternating projection process onto a non-convex set, we derive hybrid algorithms for finding doubly stochastic matrices and symmetric doubly stochastic matrices with prescribed eigenvalues. Furthermore, we prove that the proposed algorithms converge, and linear convergence is also proved. Numerical examples are presented to demonstrate the efficiency of our method.
\end{abstract}

%\keywords{
	Keywords: nonnegative matrices, doubly stochastic matrices, inverse eigenvalue problem, doubly stochastic realization, alternating projections
%}
% MSC2020: 65F18
\maketitle
	\section{Introduction}
	An $n\times n$ matrix is said to be \emph{doubly stochastic} if it has nonnegative entries and the sum of the entries in every row and every column is equal to one.
	Doubly stochastic matrices have many applications such as in operator algebras and quantum physics  an important role for them is presented  in the book \cite{Alberti}. Many interesting problems  in engineering and robotic problems such as motion planning, localization and navigation require the construction of doubly stochastic matrices. For example,  in robotic networks, the discrete time consensus algorithm requires the adjacency matrix to be doubly stochastic (see~\cite{Atrianfar,Bullo,Rivaz}).
	A recent application of doubly stochastic matrices appears in \cite{Genetic} where the authors showed a close relation between the genetic code and doubly stochastic matrix by using Hamming distance via the Gray code correspondence. In addition, doubly stochastic matrices play a significant role in probability theory, communication theory of satellite-switched, time-division, multiple-access systems, quantum mechanics, graph theory, graph-based clustering, the assignment problem, and into the Schur-Horn convexity theorem, etc. We refer the interested reader to \cite{CG_Method_2020} for more details and the references therein.
	
	Recall that the inverse eigenvalue problem for special kind of matrices is concerned with constructing a matrix that maintains the required structure from its set of eigenvalues.
	Constructing matrices for the inverse eigenvalue problems arise in a remarkable variety of applications, including system and control theory, geophysics, molecular spectroscopy, quantum mechanics, particle physics, structure analysis, economics and operation research to name a few  (\cite{Golub_05}). See also \cite{Smo} for a recent application.
	
	The nonnegative inverse eigenvalue problem (NIEP) asks which sets of $n$ complex numbers can occur as the spectrum of an $n\times n$ nonnegative matrix $A$.
	As mentioned in \cite{Egleston_2004}, many sub-problems have emerged from the NIEP because of its complexity. One of these is the symmetric nonnegative inverse eigenvalue problem (SNIEP) which deals with finding which sets of $n$ real numbers serve as the spectrum of an $n\times n$ symmetric nonnegative matrix $A$.
	Another related problem is the stochastic inverse eigenvalue problem which is concerned with  determining necessary and sufficient conditions for a list of $n$ complex numbers to be the spectrum of a stochastic $n \times n$ matrix (an $n \times n$ matrix is called stochastic if it is entry-wise nonnegative and if the sum of the entries in each row equals one).
	However, Johnson \cite{John_1981} shows that if a nonnegative matrix $A$ has a positive Perron root $\rho$ and corresponding positive eigenvector $v,$ then the matrix $\rho^{-1}\textnormal{diag}(v)^{-1}A \ \textnormal{diag}(v)$ is a row stochastic matrix. Thus, the stochastic inverse eigenvalue problem and the (normalized) NIEP are equivalent.
	
	The doubly stochastic inverse eigenvalue problem (hereafter, DIEP) concerns the reconstruction of a doubly stochastic matrix from certain prescribed eigenvalues.
	Moreover, the \emph{symmetric doubly stochastic inverse eigenvalue problem} (hereafter, SDIEP) can be stated as follows:
	given $ \lambda=\left(1,\lambda_2,\ldots,\lambda_n\right)$, find an $n \times n$ symmetric doubly stochastic  matrix with spectrum $\lambda$ (if such a matrix exists). It is worth noting here that the DIEP and the NIEP are not equivalent  \cite{John_1981}.
	For more details concerning these problems see \cite{M1,M2,M3,M4,M5,M6}.
	
	In the language of \cite{Golub_05}, we have two  challenging issues  for  DIEP.  The first one deals with  finding all the necessary and sufficient conditions for its solvability. The second challenge lies in knowing that such a matrix exists, how to construct it numerically.
	
	A list $ \lambda=\left(1,\lambda_2,\ldots,\lambda_n\right)$ of (real) numbers is said to be (s.d.s) d.s. realizable if there exists an $n\times n$ (symmetric) doubly stochastic matrix $A$ with $\lambda$ as its spectrum. Some of the most broad-based necessary conditions for a list $\lambda=\left(1,\lambda_2,\ldots,\lambda_n\right)$ to be d.s. realizable, are given by:
	\begin{enumerate}
		\item From Perron-Frobenius theorem (see~\cite{Minc_88}), $1$ is the maximal positive eigenvalue (Perron eigenvalue) which is also  greater than or equal to the modulus of each of the other eigenvalues. To this eigenvalue corresponds the eigenvector $\left(1,\ldots,1\right)^T$ which is also referred to as the Perron-Frobenius eigenvector.
		
		\item $\lambda=\overline{\lambda}.$
		\item $s_k=1+\sum_{i=2}^{n}\lambda_i^k \ge 0, \quad k=1,2,\ldots\cdot$
		\item  $s_k^m\le n^{m-1}s_{km}$ for all $k, m=1,2,\ldots$ (such inequalities are referred to as the JLL conditions \cite{Minc_88}).
	\end{enumerate}
	We now set the scene for more notation. For $\lambda=(1,\lambda_2,...,\lambda_n)$, let
	\begin{equation}
		\Lambda=\textnormal{diag}(\lambda)=\textnormal{diag}\left(1,\lambda_2,\ldots,\lambda_n \right),\label{eq:decreas_lambda}
	\end{equation}
	and let $\textbf{M}$ be the set of all real $n\times n$ matrices with eigenvalues $\lambda$ defined by
	\begin{equation*}
		\textbf{M}=\{A\in M_n\left( \mathbb{R}\right) \colon A=P\Lambda P^{-1} \textnormal{ for some nonsingular } P\}.
	\end{equation*}
	Let $\textbf{N}$ be the cone of nonnegative $n\times n$ matrices defined by
	\begin{equation*}
		\textbf{N}=\{A\in M_n\left(\mathbb{R}\right)\colon a_{ij}\ge 0\quad \textnormal{for all} \ i,j\}.
	\end{equation*}
	Let $e=\left(1,\ldots,1\right)^T$ be in $\mathbb{R}^n$ and  let $\widehat{\Omega}_{n}$ denote the set of generalized doubly stochastic $n\times n$ real matrices,
	\begin{equation*}\widehat{\Omega}_{n}=\{A\in M_n\left( \mathbb{R}\right)\colon Ae=e \textnormal{ and } A^Te=e\}.
	\end{equation*}

	In 2006, Orsi \cite{Orsi} developed a novel algorithm to solve the (symmetric) nonnegative inverse eigenvalue problem NIEP (SNIEP) using an alternating projection method on the two sets $\textbf{M}$ and $\textbf{N}$.  Earlier, some important algorithmic approaches were also developed in \cite{Chu_1998} for NIEP and SNIEP based on  least squares methods that can be formulated as constrained optimization problems. However, most of  these methods only apply to their respective feasible region but not to the general case. So this is one of the reasons why the SNIEP and NIEP have not been solved completely.

	In this paper, we are interested in finding numerical solutions for both DIEP and SDIEP.  Many exploratory works on the DIEP have been established, such as monotone and non-monotone Riemannian inexact Newton-CG methods \cite{CG_Method_2020} and Riemannian optimization approach to solve a subset of convex optimization problems wherein the optimization variable is a doubly stochastic matrix \cite{Douik_Graph,Douik_geo}. Also, in \cite{Chen_21}, the author considered another kind of DIEP, that is based on partial information on eigenvalues and eigenvectors. However, our aim  here  is to  further develop extensions of the methods used in \cite{Orsi} and \cite{dykstra_algo,dykstra} which is sort of  a ``combination'' of two algorithms virtually similar to those  mentioned in \cite{Orsi, dykstra_algo}.
	The rest of this paper is organized as follows. In Section 2, we study the alternating projections on particular subsets  of the three sets $\textbf{M, N}$ and $\widehat{\Omega}_{n}.$ This will require a careful study of the properties of the projections onto closed subspaces, closed linear varieties, closed convex sets or simply closed sets. This in turn will enable us to gain insight on  developing  an approximation algorithm that runs among the three above mentioned sets where  one of them is non-convex.

	In the third and  fourth sections, we propose a hybrid alternating projection algorithm to solve the SDIEP and DIEP. In Section 5, the convergence of the proposed algorithm is established under suitable conditions. Finally, we give some numerical experiments to evaluate the efficiency of the proposed method in Section 6.
	\section{Preliminary}
	We shall start with recalling some basic facts concerning the projection of a vector onto a closed subspace of a Hilbert space.
	\begin{theorem}[Orthogonal Projection Theorem]\label{th:Ortho_Proj}
		Let $H$ be a Hilbert space and $M$ a closed subspace of $H.$ For each vector $x \in H,$ there exists a unique vector $m_0 \in M$ such that $\|x-m_0\|\le\|x-m\|$ for all $m \in M.$ Moreover, a necessary and sufficient condition for $m_0 \in M$ to be the unique minimizing vector is that $x-m_0$ is orthogonal to $M.$
	\end{theorem}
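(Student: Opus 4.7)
The plan is to attack the three assertions (existence, uniqueness, characterization) in sequence, with the parallelogram law as the unifying tool. First I would set $d \defeq \inf_{m \in M} \|x-m\|$ and fix a minimizing sequence $(m_n) \subset M$ with $\|x-m_n\| \to d$. To show $(m_n)$ is Cauchy I would apply the parallelogram law to the vectors $x - m_n$ and $x - m_k$, obtaining
\begin{equation*}
\|m_n - m_k\|^2 = 2\|x-m_n\|^2 + 2\|x-m_k\|^2 - 4\left\|x - \tfrac{m_n+m_k}{2}\right\|^2.
\end{equation*}
Since $\tfrac{1}{2}(m_n+m_k) \in M$ (here I use that $M$ is a subspace), the last term is at least $4d^2$, so as $n,k \to \infty$ the right-hand side tends to $2d^2+2d^2-4d^2 = 0$. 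By completeness of $H$, $m_n \to m_0$ for some $m_0 \in H$, and because $M$ is closed I get $m_0 \in M$; continuity of the norm then yields $\|x-m_0\| = d$.

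Next, for uniqueness, I would suppose $m_0, m_0' \in M$ both realize $d$ and apply the same parallelogram identity to $x-m_0$ and $x-m_0'$; the midpoint $\tfrac{1}{2}(m_0+m_0')$ lies in $M$, forcing $\|m_0-m_0'\|^2 \le 0$, hence $m_0 = m_0'$.

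For the orthogonality characterization I would argue both directions. If $x - m_0 \perp M$, then for any $m \in M$ Pythagoras gives $\|x-m\|^2 = \|(x-m_0)+(m_0-m)\|^2 = \|x-m_0\|^2 + \|m_0-m\|^2 \ge \|x-m_0\|^2$, so $m_0$ is the minimizer. Conversely, if $m_0$ is the minimizer, I would fix any $m \in M$ (assume $m \neq 0$) and exploit that $m_0 + tm \in M$ for every scalar $t$; minimizing the scalar function $t \mapsto \|x - m_0 - tm\|^2 = \|x-m_0\|^2 - 2t\,\mathrm{Re}\langle x-m_0, m\rangle + t^2\|m\|^2$ at $t=0$ forces $\mathrm{Re}\langle x-m_0, m\rangle = 0$; over a complex Hilbert space I would repeat the argument with $tm$ replaced by $itm$ to kill the imaginary part as well, concluding $\langle x-m_0, m\rangle = 0$ for all $m \in M$.

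The only mildly delicate step is the Cauchy argument for the minimizing sequence, where I need to be sure the midpoint trick can be carried out inside $M$ — this is exactly where closedness and the subspace structure are both used, and it is worth flagging because in the later projection-theoretic developments of the paper the underlying set is not always a subspace, so analogous arguments will require either convexity (to keep the midpoint in the set) or a more careful treatment when projecting onto non-convex sets such as $\mathbf{M}$.
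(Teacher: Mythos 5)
Your proof is correct and complete: the minimizing-sequence argument via the parallelogram law for existence and uniqueness, followed by the Pythagorean/variational argument for the orthogonality characterization, is the standard textbook proof of this classical result. The paper itself states Theorem~\ref{th:Ortho_Proj} as a recalled preliminary without proof, so there is nothing to compare against; your closing remark that the midpoint trick relies on convexity (and hence must be replaced when projecting onto the non-convex set $\mathcal{M}$) accurately anticipates the distinction the paper draws between Theorem~\ref{th:Ortho_Proj}, Kolmogorov's criterion, and the non-unique projections used later.
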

	For convex sets, the following result is a direct extension of the preceding theorem.
	\begin{theorem}[Kolmogorov's criterion]\label{th:Kolmog_Proj}
		Let $x$ be a vector in a Hilbert space $H$ with inner product  $\langle \ . \  , \ . \ \rangle$ and let $C$ be a closed convex subset of $H.$ Then there exists a unique vector $c_0 \in C$ such that $\|x-c_0\|\le\|x-c\|$ for all $c \in C.$ Moreover, a necessary and sufficient condition that $c_0$ be the unique minimizing vector is that $\langle x-c_0,c-c_0\rangle\le 0$ for all $c \in C.$
	\end{theorem}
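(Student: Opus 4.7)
The plan is to treat the two assertions separately: first show existence and uniqueness of the minimizer via a Cauchy sequence argument based on the parallelogram identity, then establish the variational inequality as a necessary and sufficient optimality condition.

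For existence, I would set $d = \inf_{c \in C}\|x-c\|$ and pick a minimizing sequence $(c_n) \subset C$ with $\|x-c_n\| \to d$. The key computation is to apply the parallelogram identity to the vectors $x-c_n$ and $x-c_m$, which yields
\begin{equation*}
\|c_n-c_m\|^2 = 2\|x-c_n\|^2 + 2\|x-c_m\|^2 - 4\bigl\|x-\tfrac{c_n+c_m}{2}\bigr\|^2.
\end{equation*}
Since $C$ is convex, the midpoint $\tfrac{c_n+c_m}{2}$ lies in $C$, so the last term is bounded below by $4d^2$. Letting $n,m\to\infty$ forces $\|c_n-c_m\|\to 0$, and completeness of $H$ together with closedness of $C$ gives a limit $c_0 \in C$ with $\|x-c_0\|=d$. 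Uniqueness follows from the same inequality: if two minimizers existed, plugging them in would yield $\|c_n-c_m\|^2 \le 0$.

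For the characterization, necessity is obtained by a standard first-variation argument. Given any $c \in C$ and $t \in (0,1]$, convexity gives $c_t := c_0 + t(c-c_0) \in C$, so
\begin{equation*}
\|x-c_0\|^2 \le \|x-c_t\|^2 = \|x-c_0\|^2 - 2t\langle x-c_0,\, c-c_0\rangle + t^2\|c-c_0\|^2.
\end{equation*}
Dividing by $t>0$ and sending $t\to 0^+$ yields $\langle x-c_0,\, c-c_0\rangle \le 0$. For sufficiency, assume this inequality holds for every $c \in C$ and expand
\begin{equation*}
\|x-c\|^2 = \|(x-c_0)+(c_0-c)\|^2 = \|x-c_0\|^2 - 2\langle x-c_0,\, c-c_0\rangle + \|c_0-c\|^2,
\end{equation*}
so both correction terms are nonnegative, giving $\|x-c\|^2 \ge \|x-c_0\|^2$ as desired.

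I do not anticipate a serious obstacle here, since the argument is classical; the only subtle point is making sure convexity of $C$ is used in exactly the right place (namely, to keep the midpoints $\tfrac{c_n+c_m}{2}$ and the segment points $c_0 + t(c-c_0)$ inside $C$), rather than invoking a linear-subspace property as in Theorem~\ref{th:Ortho_Proj}. The Hilbert space axioms enter only through the parallelogram identity and completeness, so the proof adapts the subspace case with minimal modification.
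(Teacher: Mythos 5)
Your proof is correct and complete; the paper itself states Kolmogorov's criterion without proof, quoting it as a classical extension of the orthogonal projection theorem, so there is no in-paper argument to compare against. Your route (parallelogram identity plus a minimizing sequence for existence and uniqueness, then the first-variation argument along the segment $c_0+t(c-c_0)$ for the characterization) is the standard one, and you correctly isolate the two places where convexity of $C$ replaces the subspace structure of Theorem~\ref{th:Ortho_Proj}. The only caveat worth recording is that the expansion $\|a+b\|^2=\|a\|^2+2\langle a,b\rangle+\|b\|^2$ and hence the criterion $\langle x-c_0,c-c_0\rangle\le 0$ presuppose a real Hilbert space; over $\mathbb{C}$ (as in the paper's later use of $M_n(\mathbb{C})$ with $\langle A,B\rangle=\mathrm{tr}(AB^*)$) the condition should read $\Re\langle x-c_0,c-c_0\rangle\le 0$, a defect of the statement as quoted rather than of your argument.
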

	Let $x$ be a vector in a Hilbert space $H$ and let $C$ be a closed subset of $H.$ Then, as usual a vector $c_0 \in C$ satisfies $\|x-c_0\|\le\|x-c\|$ for all $c \in C$ will be called a ``projection'' of $x$ onto $C.$ We will use $y = P_C(x)$ to denote that $y$ is a projection of $x$ onto $C$ though the uniqueness is not guaranteed.
	
	We have two ways to regard the DIEP (SDIEP). From an algebraic perspective, it is a structure problem, where one finds algebraically (symmetric) doubly stochastic matrix with prescribed eigenvalues. From a geometric point of view, the solutions of the DIEP or the SDIEP are the intersection of the three sets $\textbf{M,  N}$ and $\widehat{\Omega}_n.$ This immediately leads to a solution of the set intersection problem or the feasibility problem as follows,
	\begin{equation}
		\textnormal{Find}\quad x\in C=\textbf{M}\bigcap\textbf{N}\bigcap\widehat{\Omega}_n.
	\end{equation}	
	Moreover, we assume that for any $x \in H,$ the calculation of $P_C(x)$ is not trivial, whereas, for $P_{\textbf{M}}(x),P_{\textbf{N}}(x)$ and $P_{\widehat{\Omega}_n}(x)$ are easy to obtain.
	
	We note that $x\in \textbf{M}\bigcap\textbf{N}\bigcap\widehat{\Omega}_n$ is equivalent to $x\in \textbf{M}\bigcap\left( \textbf{N}\bigcap\widehat{\Omega}_n\right).$
	So that our method is based on alternative projections  onto the sets $\textbf{M}$ and $\left(\textbf{N}\bigcap\widehat{\Omega}_n\right)$ and then  finding a `best' projection.
	In order to find the projection onto $\left(\textbf{N}\bigcap\widehat{\Omega}_n\right),$ we shall present a variation of Von Neumann's scheme which was introduced by Dykstra as follows.
	
	Dykstra's algorithm was originally developed to project a given point in a finite-dimensional inner product space onto a closed convex cone. In \cite{dykstra,dykstra_algo}, Boyle and Dykstra describe how to find  the closest point $c_0$  to the intersection of several convex sets $C_i, i=1,\ldots,r,$ by generating two sequences, the iterates $\left\{x^n_i\right\}$ and the increments $\left\{I^n_i\right\},$ with $n \in \mathbb{N}$ and $i = 1, \ldots, r.$
	These sequences are defined by the recursive formulas
	\begin{eqnarray}\label{dyskstra_seq}
		x_0^n&=x_r^{n-1}, \ n\in \mathbb{N^*},\nonumber\\
		x_i^n&=P_{C_i}\left( x_{i-1}^n-I_i^{n-1}\right) \quad\textnormal{ and }\quad I^n_i=x_i^n-\left( x^n_{i-1}-I^{n-1}_i\right),
\end{eqnarray}
	for $i=1,\ldots,r$  with initial values $x_r^0=c_0, \ I^0_i=0$ and $ i=1,\ldots,r.$
	
	We note, that if some $C_j$ of the sets $C_i,$ $i = 1,\ldots, r,$ is a closed linear variety in $H,$ then the sequence $I_j^ n,$ of auxiliary quantities (``increments'') needs not to be computed, as in this case
	\begin{equation}\label{Proj_on_affine}
		x_j^n=P_{C_j}\left( x_{j-1}^n-I_{j}^{n-1}\right)=P_{C_j}\left( x_{j-1}^n\right),
	\end{equation}
	for all $n\in \mathbb{N}.$ To see why, set $C_j = b_j+L_j,$ where $b_j\in H$ and $L_j$ is a closed linear subspace of $H.$ Clearly for any $y \in H$ we have $P_{C_j}\left(y\right)= P_{L_j}\left(y - b_j\right) + b_j$ and hence
	\begin{equation*}
		P_{C_j}\left( y\right) -y=P_{L_j}\left(y - b_j\right) -\left( y- b_j\right) \in L_j^\perp,
	\end{equation*}
	where $L_j^\perp$ denotes the orthogonal complement of $L_j.$ Thus $$I_j^n=	P_{C_j}\left( x_{i-1}^n-I_i^{n-1}\right) -\left( x_{i-1}^n-I_i^{n-1}\right)\in L_j^\perp$$ for all $n \in \mathbb{N^*},$ and
	\begin{eqnarray*}
		x_j^n&=P_{L_j}\left( x_{j-1}^n-I_{j}^{n-1}-b_j\right)+b_j\\
		&=P_{L_j}\left( \left( x_{j-1}^n-b_j\right) -I_{j}^{n-1}\right)+b_j\\
		&=P_{L_j}\left( x_{j-1}^n-b_j\right)+b_j \qquad \textnormal{(as }
		P_{L_j} \textnormal{ is linear)}\\
		&=P_{C_j}\left( x_{j-1}^n\right).
	\end{eqnarray*}
	
	For our purposes, the following theorem guarantees that Dykstra's algorithm converges to the closest point.
	\begin{theorem}[Boyle and Dykstra~\cite{dykstra}]\label{th::dykstra}
Let $C_1, \ldots ,C_r$ be closed and convex subsets of a Hilbert space $H$ such that $C = \bigcap_{i=1}^{r}C_i\ne \emptyset.$ For any $i = 1, \ldots , r$ and any $x_0 \in H,$ the sequence $\left\{x^n_i\right\}$ generated by \ref{dyskstra_seq} converges strongly to $x^* = P_C\left(x_0\right)$ (i.e.  $\| x^n_i- x^*\| \longrightarrow 0$ as $n\rightarrow\infty$).
	\end{theorem}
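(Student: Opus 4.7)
The plan is to follow the classical Boyle-Dykstra scheme, whose three main ingredients are the three-point (cosine) identity in the Hilbert space $H$, Kolmogorov's criterion (Theorem~\ref{th:Kolmog_Proj}), and a double telescoping argument over the outer iteration index $n$ and the cyclic index $i$.

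First I would record the fundamental normal-cone relation. Since $x_i^n = P_{C_i}(x_{i-1}^n - I_i^{n-1})$, and by the update rule $I_i^n = x_i^n - (x_{i-1}^n - I_i^{n-1})$, Theorem~\ref{th:Kolmog_Proj} applied with $C = C_i$ gives, for every $c \in C_i$,
\[
\langle I_i^n,\, c - x_i^n\rangle \geq 0.
\]
Using $x_{i-1}^n - I_i^{n-1} = x_i^n - I_i^n$, a direct expansion of $\|x_{i-1}^n - I_i^{n-1} - y\|^2$ in two different ways (once against $x_i^n$, once against $x_{i-1}^n$) and elimination yields the pointwise identity
\[
\|x_{i-1}^n - y\|^2 - \|x_i^n - y\|^2 = \|I_i^n\|^2 - \|I_i^{n-1}\|^2 + 2\langle I_i^{n-1}, x_{i-1}^n - y\rangle - 2\langle I_i^n, y - x_i^n\rangle.
\]
For $y \in C \subseteq C_i$ the last inner product is nonnegative by the normal-cone relation, so this identity becomes a usable lower bound on the norm decrement.

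Next I would sum this relation over $i = 1,\ldots,r$ and $n = 1,\ldots,N$. The left-hand side telescopes along cyclic passes (using $x_0^n = x_r^{n-1}$ and $x_0^1 = x_0$) to $\|x_0 - y\|^2 - \|x_r^N - y\|^2$, and the difference $\|I_i^n\|^2 - \|I_i^{n-1}\|^2$ telescopes in $n$ using $I_i^0 = 0$. The main technical obstacle, and the crux of Boyle-Dykstra's analysis, is to control the cross sum $\sum_{i,n}\langle I_i^{n-1}, x_{i-1}^n - x^*\rangle$ with $x^* \defeq P_C(x_0)$. Here I would write $x_{i-1}^n - x^* = (x_{i-1}^n - x_i^{n-1}) + (x_i^{n-1} - x^*)$, so that $\langle I_i^{n-1}, x_i^{n-1} - x^*\rangle$ has the right sign by the normal-cone relation applied to $I_i^{n-1}$ at $x_i^{n-1}$, while $x_{i-1}^n - x_i^{n-1}$ can be re-expressed as a telescoping sum of intermediate increments; Cauchy-Schwarz and rearrangement then absorb the residual contribution into the $\|I_i^n\|$ norms. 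The outcome is the summability $\sum_{n,i}\|I_i^n\|^2 < \infty$, in particular $\|I_i^n\|\to 0$, $\|x_i^n - x_{i-1}^n\| \to 0$, and the sequence $\{x_i^n\}$ is bounded in $H$.

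Finally I would identify the limit. Boundedness furnishes a weakly convergent subsequence $x_i^{n_k}\rightharpoonup x^\infty$; since each $C_j$ is closed and convex (hence weakly closed) and the cyclic gap $\|x_i^n - x_j^n\| \to 0$, the limit $x^\infty$ lies in every $C_j$, hence in $C$. To identify $x^\infty = x^*$, I would pass to the limit in a telescoped version of the three-point identity, now against an arbitrary $c \in C$, to extract the Kolmogorov inequality $\langle x_0 - x^\infty,\, c - x^\infty\rangle \leq 0$; by Theorem~\ref{th:Kolmog_Proj} this characterizes $P_C(x_0)$ uniquely, forcing $x^\infty = x^*$. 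Strong convergence of the full sequence then follows from the eventual monotone decay of $\|x_i^n - x^*\|^2$ embedded in the telescoped inequality, together with the Radon-Riesz property (weak convergence plus convergence of norms implies strong convergence in Hilbert space).
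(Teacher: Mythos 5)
First, a point of calibration: the paper does not prove Theorem~\ref{th::dykstra} at all --- it is quoted as a known result of Boyle and Dykstra~\cite{dykstra} and used as a black box. So your proposal is not being measured against an in-paper argument but against the original proof, whose skeleton (Kolmogorov's criterion, the three-point expansion, double telescoping over $n$ and $i$) you have correctly identified.

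That said, there is a genuine error at the center of your sketch. You claim the telescoping yields $\sum_{n,i}\|I_i^n\|^2<\infty$, hence $\|I_i^n\|\to 0$. This is false: the increments of Dykstra's algorithm do not tend to zero in general. Take $H=\mathbb{R}$, $C_1=(-\infty,0]$, $C_2=[0,\infty)$, $x_0=1$; then $x_1^n=x_2^n=0$ for all $n\ge 1$ while $I_1^n=-1$ for every $n$. Indeed, summing $I_i^n-I_i^{n-1}=x_i^n-x_{i-1}^n$ gives $\sum_{i=1}^r I_i^N=x_r^N-x_0$, so the increments converge to a decomposition of $x^*-x_0$ into normal vectors and cannot all vanish unless $x_0\in C$. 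The source of the confusion is the telescoping itself: the differences $\|I_i^n\|^2-\|I_i^{n-1}\|^2$ collapse to the \emph{single} bounded term $\sum_i\|I_i^N\|^2$, not to the series $\sum_{n,i}\|I_i^n\|^2$. (There is also a sign slip in your displayed identity; the correct expansion is
\[
\|x_{i-1}^n-y\|^2-\|x_i^n-y\|^2=\|I_i^n\|^2-\|I_i^{n-1}\|^2+2\langle I_i^n,\,y-x_i^n\rangle+2\langle I_i^{n-1},\,x_{i-1}^n-y\rangle,
\]
with a plus sign on $\langle I_i^n,\,y-x_i^n\rangle$ --- this is exactly the term that is nonnegative for $y\in C\subseteq C_i$ and drives the estimate.) The quantity that the Boyle--Dykstra identity actually shows to be summable is $\sum_{n,i}\|x_i^n-x_{i-1}^n\|^2$, equivalently $\sum_{n,i}\|I_i^n-I_i^{n-1}\|^2$, and the real difficulty of their proof is to control the cross terms $\langle I_i^{n},\,x_{i-1}^{n+1}-x_i^{n}\rangle$ \emph{without} increment summability; your appeal to Cauchy--Schwarz absorbing them ``into the $\|I_i^n\|$ norms'' presupposes the false decay and therefore does not close the argument. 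Since boundedness of the iterates, the vanishing cyclic gap, and the passage to the limit in the Kolmogorov inequality all lean on this broken middle step, the proposal as written is not a proof; the final weak-compactness and Radon--Riesz steps are fine in spirit but have nothing sound to rest on.
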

	Next, we consider the following problem of interst:
	\begin{problem}\label{prob:1}
		Find $x\in C\defeq\bigcap_{i=1}^{r}C_i\ne\emptyset,$ where $C_i$ is closed set in a Hilbert space.	
	\end{problem}
	If in addition, the involved sets in Problem~\ref{prob:1} are subspaces of a Hilbert space, then Problem~\ref{prob:1} can be solved algorithmically using the method of alternating projections (MAP) which dates back to John Von Neumann \cite{Von_Neumann_33} or its Halperin's extension \cite{Halperin_62}.
	Later, Cheney and Goldstein \cite{Cheney_59} and Bregman \cite{MAP} extended the analysis of Von Neumann's alternating projection scheme to the case of two closed and convex sets.
	\begin{theorem}[see \cite{Cheney_59,MAP}]
		Let $C_1, \ldots , C_r$ be closed and convex subsets of a finite dimensional Hilbert space $H$ with nonempty intersection $C=\bigcap_{i=1}^{r} C_i.$ Then, for some $x_0\in H,$ we have that
		\begin{equation*}
			\lim\limits_{n\to \infty}\left(P_{C_r}P_{C_{r-1}}\cdots P_{C_1}\right)^nx_0\in C.
		\end{equation*}
	\end{theorem}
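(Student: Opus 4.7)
The plan is to treat the composition $T = P_{C_r}P_{C_{r-1}}\cdots P_{C_1}$ as a single operator on $H$, iterate it from the given starting point $x_0$, and show first that the orbit $\{x_n\}_{n\ge 0}$ with $x_n = T^n x_0$ is Fej\'er monotone with respect to $C$, then extract a cluster point using finite-dimensionality, and finally upgrade this to full convergence to a point of $C$.

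First I would recall that, as a direct consequence of Theorem~\ref{th:Kolmog_Proj}, each projector $P_{C_i}$ is firmly nonexpansive, so that for every $x\in H$ and every $c\in C_i$,
\[
\|P_{C_i}x - c\|^2 + \|P_{C_i}x - x\|^2 \;\le\; \|x-c\|^2.
\]
Fix $c \in C$ and define intermediate iterates $y_{n,0} = x_n$ and $y_{n,i} = P_{C_i} y_{n,i-1}$, so that $y_{n,r} = x_{n+1}$. Since $c$ belongs to each $C_i$, applying the inequality above $r$ times and telescoping gives
\[
\|x_{n+1}-c\|^2 + \sum_{i=1}^{r}\|y_{n,i}-y_{n,i-1}\|^2 \;\le\; \|x_n-c\|^2.
\]
Summing over $n$ shows that the sequence $\|x_n-c\|$ is nonincreasing and that $\sum_{n,i} \|y_{n,i}-y_{n,i-1}\|^2 < \infty$; in particular, for each $i$, $\|y_{n,i}-y_{n,i-1}\| \to 0$ as $n\to\infty$.

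Next, the orbit is bounded, contained in a ball of radius $\|x_0-c\|$ around $c$, so finite-dimensionality of $H$ together with Bolzano--Weierstrass furnishes a subsequence $x_{n_k} \to x^*$. For each $i$, the intermediate iterate $y_{n_k,i}$ differs from $x_{n_k}$ by a sum of at most $i$ vectors whose norms tend to zero, so $y_{n_k,i}\to x^*$ as well. Continuity of $P_{C_i}$ applied to the relation $y_{n_k,i} = P_{C_i} y_{n_k,i-1}$ then yields $x^* = P_{C_i}x^*$ for every $i$, and hence $x^* \in \bigcap_{i=1}^{r} C_i = C$.

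The delicate point, and the reason firm (rather than merely ordinary) nonexpansiveness is needed, is precisely this last extraction step: without the quadratic term $\|P_{C_i}x-x\|^2$ in the Fej\'er inequality one cannot force the successive displacements to vanish, and a cluster point need not lie in $C$. Once $x^*\in C$ is in hand, I would re-apply Fej\'er monotonicity with $c := x^*$; the entire sequence $\|x_n-x^*\|$ is then nonincreasing and has a subsequence tending to $0$, which forces $\|x_n - x^*\|\to 0$. Thus $T^n x_0 \to x^* \in C$, which is the desired conclusion.
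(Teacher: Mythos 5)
Your argument is correct and complete. The paper itself offers no proof of this statement --- it is quoted as a known result of Cheney--Goldstein and Br\`egman --- so the comparison can only be with those classical sources, and your proof is exactly the standard Fej\'er-monotonicity argument found there: firm nonexpansiveness of each projector (which follows from Kolmogorov's criterion exactly as you indicate), the telescoped inequality
\(\|x_{n+1}-c\|^2+\sum_{i=1}^{r}\|y_{n,i}-y_{n,i-1}\|^2\le\|x_n-c\|^2\)
giving both monotonicity of \(\|x_n-c\|\) and square-summability of the intra-cycle displacements, Bolzano--Weierstrass in finite dimensions to produce a cluster point, continuity of the projectors to place that cluster point in every \(C_i\), and finally Fej\'er monotonicity with respect to the cluster point itself to upgrade subsequential to full convergence. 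Your remark about why firm (rather than mere) nonexpansiveness is essential is apt: it is precisely the quadratic displacement term that forces a cluster point of the composed map to be a common fixed point of all \(r\) projectors rather than merely a fixed point of the composition. Two cosmetic quibbles: the monotonicity of \(\|x_n-c\|\) already follows from the single-cycle inequality before any summation over \(n\), and the theorem's phrasing ``for some \(x_0\)'' is weaker than what you actually prove, namely convergence for every starting point \(x_0\in H\).
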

	Note that the limit point of the sequence generated by any of the alternating projections methods discussed above does not need to be the closest in the intersection to the given initial point. Therefore, all the previously mentioned MAPs are not useful for solving certain optimization problems for which this optimal property (convexity) is crucial. However, Dykstra's method is not limited to using convex sets for which convergence to the closest point is guaranteed.
	
	In practice, when we work with a feasible region given by the intersection of closed subspaces (or affine sets), then Dykstra's method and Von Neumann's MAP coincide. Moreover, as pointed out
	earlier, there is no need to consider the increments associated with the subspaces that appear in the definition of the feasible region.
	
	Our presentation will be more general for finding the closest point onto the intersection of three sets at least one of them is non-convex. This leads to study the projection of an element onto a closed set and their general properties that will be used throughout the paper.
	\begin{proposition}[see~\cite{Bauschke_2013}] \label{Prop:Reduc_Dist}
		Let $C_1$ and $C_2$ be closed (nonempty) sets in a finite dimensional Hilbert space $H.$
		If
		\begin{equation*}
			x_{i+1} = P_{C_1} (y_i), \ y_{i+1} = P_{C_2} (x_{i+1}),
		\end{equation*}
		for $i = 0, 1, \ldots $ and if $\left\{x_n\right\}_n\cap C_2\ne \emptyset$ or $\left\{y_n\right\}_n\cap C_1\ne \emptyset,$ then there exists $\overline{x}\in C_1 \cap C_2$ such that for all $n$ sufficiently large, $x_n= y_n=\overline{x}.$
	\end{proposition}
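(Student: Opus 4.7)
The plan is to exploit a simple but crucial property of the projection: whenever $z$ belongs to a closed set $C$, the only point $y\in C$ satisfying $\|z-y\|\le\|z-c\|$ for every $c\in C$ is $y=z$ itself, as one sees by choosing $c=z$ in the defining inequality. Importantly, this holds even though projections onto a general closed (possibly non-convex) set need not be unique. I would also record the immediate structural fact that, from the recursions $x_{i+1}=P_{C_1}(y_i)$ and $y_{i+1}=P_{C_2}(x_{i+1})$, one has $x_n\in C_1$ and $y_n\in C_2$ for every $n\ge 1$.

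Assume first that $\{x_n\}\cap C_2\neq\emptyset$ and pick an index $n_0\ge 1$ with $x_{n_0}\in C_2$. Combined with $x_{n_0}\in C_1$, this puts $x_{n_0}\in C_1\cap C_2$, so set $\overline{x}=x_{n_0}$. Applying the preliminary observation to $\overline{x}\in C_2$ gives $y_{n_0}=P_{C_2}(\overline{x})=\overline{x}$, and applying it again to $\overline{x}\in C_1$ gives $x_{n_0+1}=P_{C_1}(y_{n_0})=P_{C_1}(\overline{x})=\overline{x}$. A straightforward induction on $n$ then yields $x_n=y_n=\overline{x}$ for every $n\ge n_0$. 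The case $\{y_n\}\cap C_1\neq\emptyset$ is entirely symmetric: picking $y_{n_0}\in C_1$ with $n_0\ge 1$, one has $y_{n_0}\in C_1\cap C_2$, and the identities $x_{n_0+1}=P_{C_1}(y_{n_0})=y_{n_0}$ and $y_{n_0+1}=P_{C_2}(x_{n_0+1})=y_{n_0}$ stabilise both sequences from that index onward.

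The only conceptual subtlety, and therefore the single place where the argument needs any real care, is the possible non-uniqueness of projections onto the non-convex set that will arise in our applications. This is completely dispensed with by the opening observation, which ensures that a point already in the target set is its own unique projection regardless of how $P_C$ selects minimisers elsewhere. Consequently no distance-monotonicity estimate such as $\|x_{n+1}-y_{n+1}\|\le\|x_n-y_n\|$ is needed here, nor any compactness argument; the finite-dimensionality of $H$ in the hypothesis is inherited from the ambient setting rather than actively used in the proof itself.
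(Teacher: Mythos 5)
Your argument is correct. The paper itself offers no proof of this proposition --- it is imported from Bauschke, Luke and Phan \cite{Bauschke_2013} with only a citation --- so there is no in-paper argument to compare against, and your elementary derivation is a perfectly adequate self-contained substitute. The whole content is indeed the observation you isolate at the outset: for any closed set $C$, convex or not, if $z\in C$ then any selection $c_0=P_C(z)$ satisfies $\|z-c_0\|\le\|z-z\|=0$, so $P_C(z)=z$ is forced and unique; hence once some iterate lands in both sets the recursion can never move again, and the induction is immediate. One small point of hygiene: as the recursion is written, $y_0$ is initial data and need not lie in $C_2$, so if one reads $\left\{y_n\right\}_n$ as including $y_0$, the hypothesis $\left\{y_n\right\}_n\cap C_1\ne\emptyset$ could be witnessed only by some $y_0\in C_1\setminus C_2$, in which case $x_1=y_0$ but nothing forces the sequences to stabilise. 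You implicitly exclude this by taking $n_0\ge 1$; it would be worth saying explicitly that the hypothesis is meant to hold at an index for which the iterate already lies in the other set by construction, which is clearly the intended reading. You are also right that neither a distance-monotonicity estimate nor finite-dimensionality is actually used in this argument.
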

	In this presentation, we apply the technique of alternating projection between three sets, $C_1, C_2$ and $C_3,$ when $C_1$ is non-convex set. In the event that projections onto these sets are computable, then  an answer strategy  is to alternately project onto $C_1, C_2\cap C_3$ on the one hand and then on $C_2, \ C_3$ to find a point in their intersection, on the other hand.
	An illustration of our proposed alternating projections of point $A$ onto the sets $C_1, \ C_2$ and $C_3$ is given in Figure~\ref{fig:c1-c2-inter-c3-plans}.
	\begin{figure}[h]
		\centering
		\includegraphics[width=0.7\linewidth]{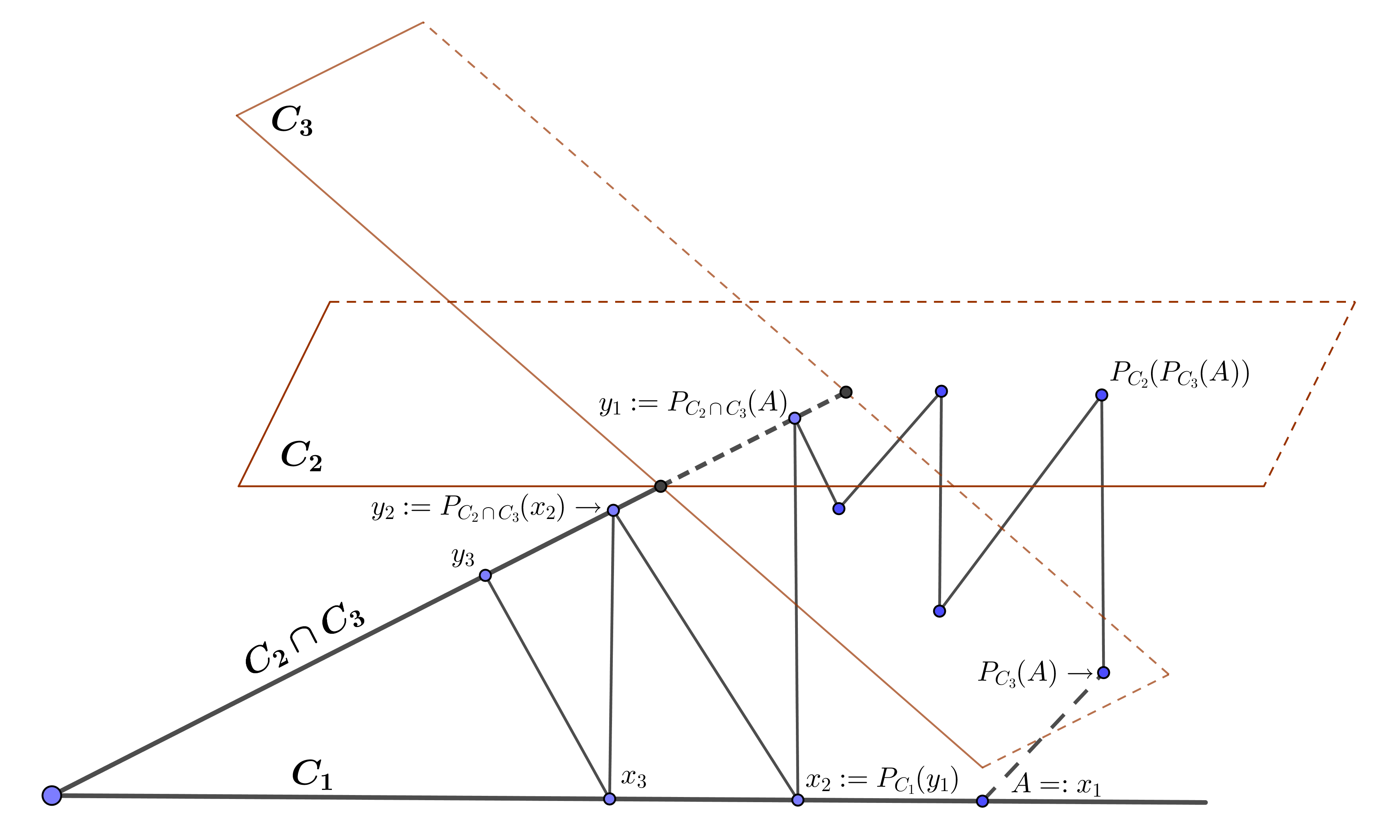}
		\caption[Alternating projections onto $C_1$ and $C_2\cap C_3.$]{Alternating projections onto $C_1$ and $C_2\cap C_3.$ More precisely, a cycle of projections scheme is performed alternatively onto every set separately and can be described as follows. Starting from a given matrix $x_1\defeq A\in C_1,$ then projecting onto $C_3$ to get $P_{C_3}\left(A\right)$. Next, starting with $P_{C_3}\left(A\right)$ we apply an alternating projection onto $C_2$ and $C_3$ until hitting a point $y_1\defeq P_{C_2\cap C_3}\left(x_1\right)$ in the intersection $C_2\cap C_3.$ Next, we project $y_1$ onto $C_1$ to obtain $x_2$ and we repeat the same process to get $y_2$ in $C_2\cap C_3$, and so on.}
		\label{fig:c1-c2-inter-c3-plans}
	\end{figure}
	If we can predict in advance that certain non-increasing distance $\|x_i - y_i\|$ are not guaranteed to reduce to zero, then we can ignore them in our algorithm.
	
	Let us turn to the Newton's identities (or the Girard-Newton formulae), which give relations between two types of symmetric polynomials, namely between Newton sums (or $k$-th moments) and elementary symmetric polynomials.
	\begin{definition}
		A polynomial $p\left(x_1,\ldots,x_n\right)$ is said to be symmetric if  for every permutation $\zeta$ of a set of $n$ objects, it holds that
		\begin{equation*}
			p\left(x_{\zeta\left(1\right)},\ldots,x_{\zeta\left(n\right)}\right)=p\left(x_1,\ldots,x_n\right).
		\end{equation*}
	\end{definition}
	The symmetric polynomials
	\begin{eqnarray*}
		\sigma_1\left(x_1,\ldots,x_n\right)&=\sum_{1\le i\le n}x_i,\\
		\sigma_2\left(x_1,\ldots,x_n\right)&=\sum_{1\le i<j\le n}x_ix_j.\\
		&\,\,\,\vdots\\
		\sigma_n\left(x_1,\ldots,x_n\right)&=x_1\cdots x_n
	\end{eqnarray*}
	are called the elementary symmetric polynomials in $x_1,\ldots,x_n.$
	
	The following identities gives the desired relationships.
	\begin{theorem}[{Newton-Girard formulas,~\cite{Seroul_00}}]
		Let $\lambda=\left(\lambda_1,\ldots,\lambda_n\right)$ be an $n$-tuple of complex numbers.
		For $1\le k\le n,$ we have
		\begin{eqnarray*}
			0&=s_1\left(\lambda\right)-\sigma_1\left(\lambda\right).\\
			0&=s_2\left(\lambda\right)-s_1\left(\lambda\right)\sigma_1\left(\lambda\right)+2\sigma_2\left(\lambda\right).\\
			&\,\,\,\vdots\\
			 0&=s_n\left(\lambda\right)-s_{n-1}\left(\lambda\right)\sigma_1\left(\lambda\right)+\cdots+(-1)^{n-1}s_1\left(\lambda\right)\sigma_{n-1}\left(\lambda\right)+(-1)^nn\sigma_n\left(\lambda\right).
		\end{eqnarray*}
		For  $k>n,$ set $k=n+i$ with $i>0$  and then it holds that
		\begin{equation*}
			s_{n+i}\left(\lambda\right)-s_{n+i-1}\left(\lambda\right)\sigma_1\left(\lambda\right)+\cdots+(-1)^ns_i\left(\lambda\right)\sigma_n\left(\lambda\right)=0.
		\end{equation*}
	\end{theorem}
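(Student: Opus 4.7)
The plan is to prove both families of identities uniformly by a generating-function argument built on the polynomial whose roots are the $\lambda_i$'s.

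First I would introduce the monic polynomial
\begin{equation*}
P(x)=\prod_{i=1}^{n}(x-\lambda_i)=\sum_{k=0}^{n}(-1)^{k}\sigma_{k}(\lambda)\,x^{n-k},\qquad \sigma_0(\lambda)\defeq 1,
\end{equation*}
or equivalently, working with the reverse generating function,
\begin{equation*}
E(t)\defeq\prod_{i=1}^{n}(1-\lambda_i t)=\sum_{k=0}^{n}(-1)^{k}\sigma_{k}(\lambda)\,t^{k}.
\end{equation*}
The key observation is that the elementary symmetric polynomials appear naturally as the coefficients of $E(t)$, while the Newton sums $s_k(\lambda)=\sum_{i}\lambda_i^{k}$ appear through its logarithmic derivative.

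The central step is to compute $E'(t)/E(t)$ in two different ways. On the one hand, termwise differentiation of the product gives
\begin{equation*}
\frac{E'(t)}{E(t)}=\sum_{i=1}^{n}\frac{-\lambda_i}{1-\lambda_i t}=-\sum_{i=1}^{n}\sum_{k\ge 0}\lambda_i^{k+1}t^{k}=-\sum_{k\ge 1}s_{k}(\lambda)\,t^{k-1},
\end{equation*}
valid as formal power series. On the other hand, differentiating the polynomial expression for $E(t)$ and clearing the denominator yields the identity
\begin{equation*}
E'(t)=-E(t)\cdot\sum_{k\ge 1}s_{k}(\lambda)\,t^{k-1},
\end{equation*}
or, written out,
\begin{equation*}
\sum_{k=1}^{n}(-1)^{k}k\,\sigma_{k}(\lambda)\,t^{k-1}=-\left(\sum_{j=0}^{n}(-1)^{j}\sigma_{j}(\lambda)\,t^{j}\right)\left(\sum_{m\ge 1}s_{m}(\lambda)\,t^{m-1}\right).
\end{equation*}

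The proof then reduces to reading off the coefficient of $t^{k-1}$ on both sides and separating into the two regimes. For $1\le k\le n$ the left-hand side contributes $(-1)^{k}k\,\sigma_{k}$, and matching with the Cauchy product on the right produces exactly
\begin{equation*}
s_{k}(\lambda)-s_{k-1}(\lambda)\sigma_{1}(\lambda)+\cdots+(-1)^{k-1}s_{1}(\lambda)\sigma_{k-1}(\lambda)+(-1)^{k}k\,\sigma_{k}(\lambda)=0,
\end{equation*}
which specializes to the first $n$ identities in the statement. For $k=n+i$ with $i>0$ the left-hand side contributes $0$ (the polynomial $E'$ has degree $n-1$), and the same coefficient extraction on the right yields the infinite-family recurrence
\begin{equation*}
s_{n+i}(\lambda)-s_{n+i-1}(\lambda)\sigma_{1}(\lambda)+\cdots+(-1)^{n}s_{i}(\lambda)\sigma_{n}(\lambda)=0.
\end{equation*}

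The only real obstacle is bookkeeping: one must keep the signs consistent, remember the convention $\sigma_0=1$, and verify that the truncation of $E(t)$ at degree $n$ is precisely what produces the dichotomy between the $k\le n$ formulas (which carry the extra term $(-1)^{k}k\sigma_k$) and the $k>n$ formulas (which do not). Once the two expansions of $E'(t)/E(t)$ are shown to agree as formal power series, both halves of the theorem fall out of a single coefficient comparison, so no additional case analysis is needed.
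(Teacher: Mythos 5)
Your proof is correct: the logarithmic-derivative identity $E'(t)=-E(t)\sum_{k\ge 1}s_k(\lambda)t^{k-1}$ for $E(t)=\prod_i(1-\lambda_i t)$, followed by coefficient extraction at $t^{k-1}$, is the standard generating-function proof of the Newton--Girard formulas, and the dichotomy between $k\le n$ (where $E'$ still contributes $(-1)^k k\sigma_k$) and $k>n$ (where it contributes nothing) is exactly right. Note that the paper itself gives no proof of this theorem --- it is quoted as a classical result with a citation to S\'eroul --- so there is no argument in the paper to compare against; your write-up simply supplies the usual proof.
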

	Obviously, the characteristic polynomial of a nonnegative matrix has real coefficients and for later use, we conclude this section with the Cayley-Hamilton theorem that  can be stated as follows.
\begin{theorem}[Hamilton-Cayley,~\cite{Seroul_00}]
	Let $A$ be an $n\times n$ matrix with coefficients in a
	commutative ring with unit. If
	$\det\left(x I_n-A\right)=\sum_{k=0}^{n}\alpha_kx^{k}$ is the characteristic polynomial of matrix $A,$ then the following matrix equation holds:
	\begin{equation}\label{eq:Cayley_Matrix}
		\sum_{k=0}^{n}\alpha_kA^{k}=0.
	\end{equation}	
\end{theorem}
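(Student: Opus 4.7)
The plan is to prove the identity by the classical adjugate (``companion-matrix'') device, which has the virtue of working verbatim over any commutative ring with unit $R$ and of sidestepping the notorious circular ``substitute $x=A$'' fallacy. First I would write the characteristic polynomial as $p(x)=\det(xI_n-A)=\sum_{k=0}^{n}\alpha_k x^k$ with $\alpha_n=1$, and introduce the classical adjugate $B(x)\defeq\mathrm{adj}(xI_n-A)$. Since every cofactor of $xI_n-A$ is the determinant of an $(n-1)\times(n-1)$ matrix whose entries are polynomials of degree at most $1$, each entry of $B(x)$ has degree at most $n-1$, and so $B(x)$ can be expanded uniquely as $B(x)=\sum_{k=0}^{n-1}B_k x^k$ for matrices $B_0,\dots,B_{n-1}\in M_n(R)$.

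Next I would invoke the standard adjugate identity $(xI_n-A)\,B(x)=\det(xI_n-A)\,I_n=p(x)\,I_n$, which is a valid equation in $M_n(R[x])$ because $R[x]$ is commutative. Substituting the expansion of $B(x)$ and matching the coefficient of $x^k$ on both sides for $k=0,1,\dots,n$ produces the recursive system
\begin{equation*}
-AB_0=\alpha_0 I_n,\qquad B_{k-1}-AB_k=\alpha_k I_n\ (1\le k\le n-1),\qquad B_{n-1}=\alpha_n I_n.
\end{equation*}

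The last step is a one-line telescoping: I would multiply the relation at level $k$ on the left by $A^k$ and sum over $k=0,1,\dots,n$. The left-hand sides collapse because the contribution $A^k B_{k-1}$ from level $k$ cancels the contribution $-A^{(k-1)+1}B_{k-1}=-A^k B_{k-1}$ coming from level $k-1$, and only the boundary terms $-A^0(AB_0)$ and $+A^n B_{n-1}$ survive, which also cancel in view of $\alpha_n=1$. The right-hand sides assemble into $\sum_{k=0}^{n}\alpha_k A^k$, yielding precisely \eqref{eq:Cayley_Matrix}.

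The key conceptual obstacle is not computational but rather a matter of rigour: one must resist the temptation simply to ``plug $x=A$'' into $\det(xI_n-A)=0$, since $\det(xI_n-A)$ is an element of $R[x]$ and a scalar-to-matrix substitution is not an a~priori well-defined ring map when the target ring $M_n(R)$ is noncommutative. The coefficient-matching device avoids this entirely, because it only uses that powers of $A$ commute with one another, a fact which holds in any ring. No hypothesis beyond commutativity of the coefficient ring $R$, already part of the statement, is required.
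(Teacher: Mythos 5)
The paper does not prove this theorem: it is stated as a quoted result, cited to S\'eroul's book, and used later only to argue that $\mathcal{M}$ is a semi-algebraic set. There is therefore no in-paper proof to compare against. Your argument is the standard adjugate proof of Cayley--Hamilton: expanding $\mathrm{adj}(xI_n-A)=\sum_{k=0}^{n-1}B_kx^k$, matching coefficients in $(xI_n-A)\,\mathrm{adj}(xI_n-A)=p(x)I_n$, and telescoping after left-multiplying the $k$-th relation by $A^k$. This is correct, works over any commutative ring with unit exactly as the statement requires, and rightly avoids the illegitimate ``substitute $x=A$'' shortcut; the only cosmetic quibble is that in the telescoping sum \emph{every} term cancels pairwise with a neighbour (the boundary terms $-AB_0$ and $A^nB_{n-1}$ included), so nothing ``survives'' to be cancelled by $\alpha_n=1$ --- that hypothesis is only needed to identify $B_{n-1}=I_n$, which your argument does not actually use.
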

\section{The symmetric doubly stochastic problem}
In this section, in order to solve SDIEP we shall apply the symmetric version of the above  recursive algorithm which   consists of alternately projecting onto three particular sets.
But first we need to introduce the relevant terminology for the symmetric case. Let $\mathcal{S}^n$ denote the set of all real symmetric $n\times n$ matrices.
As usual, $\mathcal{S}^n$ is considered as a Hilbert space with inner product
\begin{equation*}
	\langle A,B\rangle=\textnormal{ tr}\left( AB\right) =\sum_{i,j}a_{ij}b_{ij},
\end{equation*}
and the associated norm is the Frobenius norm $\| A\|_F=\langle A, A\rangle^{\frac{1}{2}}.$

For a list  $\lambda=\left(1,\lambda_2,\ldots,\lambda_n\right)$  of $n$ real numbers, we shall assume that $1\ge\lambda_2\ge\cdots\ge\lambda_n$ (renumbering if necessary) and we denote
\begin{equation}
	\Lambda=\textnormal{diag}\left(1,\lambda_2,\ldots,\lambda_n \right).
\end{equation}
In addition, we let $\mathcal{M}$ be the set of all real symmetric $n\times n$ matrices with spectrum $\lambda$ i.e.
\begin{equation*}
	\mathcal{M}=\{A\in \mathcal{S}^n \colon A=V\Lambda V^T \textnormal{ for some orthogonal } V\}.
\end{equation*}
Let $\mathcal{N}$ be the set of symmetric nonnegative $n\times n$ matrices i.e.
\begin{equation*}
	\mathcal{N}=\{A\in \mathcal{S}^n\colon a_{ij}\ge 0\quad \forall \ i,j\}.
\end{equation*}
Let ${\Omega}_{n}$ denote the set of all $n\times n$ doubly stochastic  matrices,
\begin{equation*}
	{\Omega}_{n}=\widehat{\Omega}_n\cap \mathcal{N}=\{A\in M_n\left(\mathbb{R}\right)\colon Ae=e, A^Te=e \textnormal{ and } a_{ij}\geq 0\quad \forall \  i,j\}.
\end{equation*}
Let $\widehat{\Omega}_{n}^s$ and $\Omega_{n}^s$ denote the set of symmetric generalized doubly stochastic $n\times n$ real matrices and the set of symmetric doubly stochastic $n\times n$ matrices respectively.

It follows that solving the SDIEP with prescribed eigenvalues $\lambda$ is equivalent to finding an $n\times n$ matrix $X$ that belongs to the intersection of $\mathcal{M}, \mathcal{N}$ and $\widehat{\Omega}_{n}^s.$ Thus, the following problem is considered:
\begin{equation*}
	\textnormal{ Find } X\in \mathcal{M}\cap\left(\mathcal{N}\cap\widehat{\Omega}_{n}^s\right).
\end{equation*}
As mentioned earlier, our method depends on alternative projections by finding a `best' projection onto the two sets $\mathcal{M}$ and $\left(\mathcal{N}\cap\widehat{\Omega}_{n}^s\right)$ instead of the three sets $\mathcal{M, \ N}$ and $\widehat{\Omega}_n.$
Indeed, we first apply Boyle and Dykstra's alternating projections to find the nearest symmetric doubly stochastic matrix to a given real matrix. This means that the outcome matrix is the projection of a given real matrix onto $\left(\mathcal{N}\cap\widehat{\Omega}_{n}^s\right).$ However, as $\mathcal{M}$ is non-convex then the projection of a matrix $A$ onto the set $\mathcal{M}$ may not be unique.

Using Theorem~\ref{th:approx_to_M}, a best projection of a symmetric matrix $A$ onto the set $\mathcal{M}$ can be achieved.
\begin{theorem}[see \cite{Orsi}] \label{th:approx_to_M}
	Let $V \textnormal{diag}(\mu_1,\ldots,\mu_n)V^T$ be a spectral decomposition of $A$ with $\mu_1\ge\cdots \ge \mu_n.$ Then $V\textnormal{diag}(1,\lambda_2,\ldots,\lambda_n) V^T$ is a best approximate in $\mathcal{M}$ to $A$ in the Frobenius norm.
\end{theorem}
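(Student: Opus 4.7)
The plan is to reduce the minimization of $\|A-M\|_F^2$ over $M\in\mathcal{M}$ to a rearrangement-type maximization, and then solve it using the fact that the squared entries of an orthogonal matrix form a doubly stochastic matrix, combined with Birkhoff's theorem. Since every $M\in\mathcal{M}$ has the same spectrum $\lambda$, the quantity $\|M\|_F^2=\sum_{i=1}^n\lambda_i^2$ is fixed, and $\|A\|_F^2=\sum_{i=1}^n\mu_i^2$ is also fixed. Expanding
\begin{equation*}
\|A-M\|_F^2=\|A\|_F^2+\|M\|_F^2-2\,\langle A,M\rangle,
\end{equation*}
the problem reduces to maximizing $\langle A,M\rangle=\operatorname{tr}(AM)$ over $M\in\mathcal{M}$.

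Next, I would parametrize $M=W\Lambda W^T$ for an arbitrary orthogonal $W$, use $A=V\operatorname{diag}(\mu_1,\ldots,\mu_n)V^T$, and set $Q=V^TW$, which is also orthogonal. Cyclicity of the trace then gives
\begin{equation*}
\operatorname{tr}(AM)=\operatorname{tr}\bigl(\operatorname{diag}(\mu_1,\ldots,\mu_n)\,Q\,\Lambda\,Q^T\bigr)=\sum_{i,j=1}^n q_{ij}^2\,\mu_i\lambda_j.
\end{equation*}
Because $Q$ is orthogonal, the nonnegative matrix $S=(q_{ij}^2)$ satisfies $Se=e$ and $S^Te=e$, so $S\in\Omega_n$.

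The core step is to show that, over all $S\in\Omega_n$, the bilinear form $\sum_{i,j}s_{ij}\mu_i\lambda_j$ is maximized by $S=I$ (which corresponds to $Q=I$, i.e.\ $W=V$). By Birkhoff's theorem, $S$ is a convex combination of permutation matrices, so the linear functional $S\mapsto\sum_{i,j}s_{ij}\mu_i\lambda_j$ attains its maximum on $\Omega_n$ at some permutation matrix $P_\sigma$; the value there is $\sum_i\mu_i\lambda_{\sigma(i)}$. Since $\mu_1\ge\cdots\ge\mu_n$ and $\lambda_1\ge\lambda_2\ge\cdots\ge\lambda_n$, the classical rearrangement inequality yields $\sum_i\mu_i\lambda_{\sigma(i)}\le\sum_i\mu_i\lambda_i$, with the identity permutation achieving the bound. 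Consequently $\operatorname{tr}(AM)\le\sum_i\mu_i\lambda_i$, with equality when $W=V$, i.e.\ when $M=V\Lambda V^T$.

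The only substantive obstacle is justifying the rearrangement step; the trace manipulation and the Birkhoff reduction are routine, but one must be careful that the ordering hypotheses $\mu_1\ge\cdots\ge\mu_n$ (given) and $1\ge\lambda_2\ge\cdots\ge\lambda_n$ (assumed in this section) are exactly what is needed to invoke the rearrangement inequality in the correct direction. Everything else — orthogonality of $Q$, double stochasticity of $(q_{ij}^2)$, and the fact that $V\Lambda V^T\in\mathcal{M}$ — is immediate from the definitions.
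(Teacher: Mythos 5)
Your proof is correct, but it takes a genuinely different route from the one the paper points to. The paper does not prove Theorem~\ref{th:approx_to_M} directly; it cites Orsi and identifies the ``key ingredient'' as the Hoffman--Wielandt bound (Theorem~\ref{Hoffman_th}), i.e.\ $\|A-M\|_F^2\ge\sum_i(\mu_i-\lambda_i)^2$ for ordered spectra, after which one only checks that $V\Lambda V^T$ attains this lower bound. You instead prove the needed inequality from scratch: expand $\|A-M\|_F^2$, reduce to maximizing $\operatorname{tr}(AM)$, observe that $S=(q_{ij}^2)$ is doubly stochastic for orthogonal $Q=V^TW$, and finish with Birkhoff's theorem plus the rearrangement inequality. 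This is in effect a self-contained derivation of the symmetric Hoffman--Wielandt inequality, and it is logically sound: you only need the Birkhoff bound as an upper estimate (you never need every doubly stochastic $S$ to be realized by an orthogonal $Q$), and the identity permutation is realized by $Q=I$, which yields the claimed minimizer. Your approach buys self-containedness and, incidentally, sidesteps a small wrinkle in the paper, whose quoted Hoffman--Wielandt statement is phrased in terms of singular values rather than the (possibly negative) eigenvalues actually needed here; the paper's approach is shorter once the cited lemma is granted. The ordering hypotheses $\mu_1\ge\cdots\ge\mu_n$ and $1\ge\lambda_2\ge\cdots\ge\lambda_n$ are exactly what the rearrangement step requires, as you note.
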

The key ingredient of the proof of the preceding theorem is the following observation due to Hoffman and Wielandt \cite{Golub,Horn} which also has been clarified by Brockett and Chu in their papers \cite{Brockett,Chu}, respectively.	
\begin{theorem}\label{Hoffman_th}
	Let $A, B\in M_{m, n}\left(\mathbb{C}\right)$ with $m\ge n$ be given and suppose that $\tau_1,\ldots,\tau_n$ and $\sigma_1,\ldots,\sigma_n$ are the non-increasingly ordered singular values of $A$ and $B,$ respectively. Then
	\begin{equation*}
		\| A-B\|_F^2\ge \sum_{i=1}^{n}\left( \tau_i-\sigma_i\right)^2.
	\end{equation*}
\end{theorem}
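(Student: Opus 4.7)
The plan is to reduce the Frobenius-norm inequality to a trace inequality (the classical von Neumann trace inequality for singular values) and then bound the trace using a doubly-stochastic / rearrangement argument. Expanding the inner product,
\begin{equation*}
\|A-B\|_F^2 = \|A\|_F^2 + \|B\|_F^2 - 2\,\mathrm{Re}\,\mathrm{tr}(A^*B) = \sum_{i=1}^{n}\tau_i^2 + \sum_{i=1}^{n}\sigma_i^2 - 2\,\mathrm{Re}\,\mathrm{tr}(A^*B),
\end{equation*}
since $\|A\|_F^2 = \mathrm{tr}(A^*A) = \sum_{i}\tau_i^2$ and likewise for $B$. Comparing with $\sum_{i=1}^{n}(\tau_i-\sigma_i)^2 = \sum_i\tau_i^2 + \sum_i\sigma_i^2 - 2\sum_i\tau_i\sigma_i$, the claim becomes equivalent to the trace bound
\begin{equation*}
\mathrm{Re}\,\mathrm{tr}(A^*B) \;\le\; \sum_{i=1}^{n}\tau_i\sigma_i.
\end{equation*}

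To establish this bound I would use the singular value decompositions $A = U_A\Sigma_A V_A^*$ and $B = U_B\Sigma_B V_B^*$, where $\Sigma_A,\Sigma_B\in M_{m,n}(\mathbb{R})$ carry the $\tau_i$'s and $\sigma_i$'s along the leading diagonal in non-increasing order. Setting the unitaries $W \defeq U_A^*U_B$ (of size $m\times m$) and $Z\defeq V_B^*V_A$ (of size $n\times n$), one writes
\begin{equation*}
\mathrm{tr}(A^*B) \;=\; \mathrm{tr}\bigl(\Sigma_A^*\,W\,\Sigma_B\,Z\bigr) \;=\; \sum_{i,j=1}^{n}\tau_i\,\sigma_j\,w_{ij}z_{ji}.
\end{equation*}
Taking real parts and absolute values gives $\mathrm{Re}\,\mathrm{tr}(A^*B) \le \sum_{i,j}\tau_i\sigma_j|w_{ij}z_{ji}|$. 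A short Cauchy--Schwarz computation on each row and each column of the matrix $C\defeq(|w_{ij}z_{ji}|)_{i,j=1}^n$, combined with the unitarity of $W$ and $Z$, shows that $C$ is doubly substochastic, i.e.\ $\sum_{i}C_{ij}\le 1$ and $\sum_{j}C_{ij}\le 1$.

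The final step invokes a standard extension of Birkhoff's theorem: every doubly substochastic matrix lies in the convex hull of sub-permutation matrices, so the linear functional $C\mapsto \sum_{i,j}\tau_i\sigma_j C_{ij}$ attains its maximum at such an extreme point, yielding a bound of the form $\sum_{i}\tau_i\sigma_{\pi(i)}$ for some (sub-)permutation $\pi$. The classical rearrangement inequality, applied to the two non-increasingly ordered sequences $(\tau_i)$ and $(\sigma_i)$, then shows this maximum equals $\sum_i\tau_i\sigma_i$, completing the proof. The main obstacle is the doubly-substochastic estimate on $C$; a cleaner alternative, in the spirit of the Brockett--Chu reformulation cited in the excerpt, is to pass to the Hermitian dilations $\tilde A = \bigl(\begin{smallmatrix}0 & A \\ A^* & 0\end{smallmatrix}\bigr)$ and $\tilde B = \bigl(\begin{smallmatrix}0 & B \\ B^* & 0\end{smallmatrix}\bigr)$, whose eigenvalues are $\{\pm\tau_i\}$ and $\{\pm\sigma_i\}$ (plus zeros), and then apply the Hoffman--Wielandt inequality for Hermitian matrices after matching signs; this bypasses Birkhoff at the cost of an enlarged ambient space.
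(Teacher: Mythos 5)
Your reduction and the subsequent argument are correct, but note that the paper itself does not prove this theorem: it is quoted as a known result of Hoffman--Wielandt type, with pointers to Golub--Van Loan, Horn--Johnson and to the papers of Brockett and Chu, so there is no in-paper proof to compare against. On its own merits, your proof is the standard and complete one: the expansion $\|A-B\|_F^2=\sum_i\tau_i^2+\sum_i\sigma_i^2-2\,\mathrm{Re}\,\mathrm{tr}(A^*B)$ correctly reduces the claim to von Neumann's trace inequality $\mathrm{Re}\,\mathrm{tr}(A^*B)\le\sum_i\tau_i\sigma_i$; the identity $\mathrm{tr}(A^*B)=\sum_{i,j}\tau_i\sigma_j w_{ij}z_{ji}$ checks out (only indices $i,j\le n$ survive because $\Sigma_A^*$ and $\Sigma_B$ vanish outside the leading $n\times n$ block); the Cauchy--Schwarz estimate does show $(|w_{ij}z_{ji}|)_{i,j\le n}$ is doubly substochastic, since each relevant partial row or column of a unitary matrix has squared norm at most $1$; and the passage through sub-permutation extreme points plus the rearrangement inequality (legitimate because all $\tau_i\sigma_j\ge 0$) closes the argument. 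The only quibble is with your closing remark that the Hermitian-dilation route ``bypasses Birkhoff'': the Hoffman--Wielandt inequality for Hermitian matrices with sorted eigenvalues, which that route invokes, is itself usually proved by exactly the same Birkhoff--von Neumann optimization applied to the doubly stochastic matrix $(|u_{ij}|^2)$, so the combinatorial step is relocated rather than removed. Either way, the result stands as you have argued it.
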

The projection onto the convex set $\mathcal{N}$ is well known and it is given in the following lemma where for completeness we include its proof here.
\begin{lemma}\label{th:proj_N}
	Let $A \in M_n\left(\mathbb{R}\right)$ and let $A_+ \in M_n\left(\mathbb{R}^+\right)$ be a matrix defined by
	\begin{equation*}
		\left( A_+\right)_{ij}=
\cases{a_{ij},  &\textnormal{if } $a_{ij}\ge 0$,\\
0, & \textnormal{if} $a_{ij}<0$,}
	\end{equation*}
	for all $1 \le i, j \le n.$
	Then $A_+$ is the best approximate in the set of  nonnegative $n\times n$ matrices to $A$ in the Frobenius norm. This is especially useful in $\mathcal{N}\subset \mathcal{S}^n.$
\end{lemma}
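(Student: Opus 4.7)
The plan is to exploit the fact that the Frobenius norm decomposes as a sum of squared entries, so the minimization over the entrywise constraint $b_{ij}\ge 0$ decouples into $n^2$ scalar problems that can be solved independently.

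First I would write, for any nonnegative matrix $B\in M_n(\mathbb{R}^+)$,
\begin{equation*}
\|A-B\|_F^2=\sum_{i,j=1}^n (a_{ij}-b_{ij})^2,
\end{equation*}
and observe that minimizing the left-hand side over $B\in M_n(\mathbb{R}^+)$ is equivalent to minimizing each summand $(a_{ij}-b_{ij})^2$ separately over $b_{ij}\ge 0$. Next I would solve the scalar problem: for fixed $a\in\mathbb{R}$, the function $b\mapsto (a-b)^2$ restricted to $b\ge 0$ attains its minimum at $b=a$ when $a\ge 0$ (giving value $0$), and at $b=0$ when $a<0$ (since the function is strictly increasing on $[0,\infty)$ whenever $a<0$). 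Either way, the minimizer is exactly the positive part of $a$, which matches the definition of $(A_+)_{ij}$. Summing these pointwise minimizers reconstructs $A_+$, and shows the minimum is uniquely attained there.

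For the last sentence of the lemma, concerning $\mathcal{N}\subset\mathcal{S}^n$, I would simply note that if $A$ is symmetric then $a_{ij}=a_{ji}$ implies $(A_+)_{ij}=(A_+)_{ji}$, so $A_+\in\mathcal{S}^n$, and thus $A_+\in\mathcal{N}$. Since $\mathcal{N}$ is a subset of the full nonnegative matrix cone, the best approximation over the larger set already lies in the smaller set, so $A_+$ is also the best approximation within $\mathcal{N}$.

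There is no real obstacle here: the whole content of the lemma is the separability of the Frobenius norm over matrix entries combined with a trivial one-variable calculus argument. The only point requiring a moment's care is the symmetric part, where one must verify that the entrywise projection preserves symmetry so that no further constraint needs to be imposed; the clean way to phrase this is that the projection onto the convex superset $M_n(\mathbb{R}^+)$ automatically lands in the convex subset $\mathcal{N}$ whenever the input is symmetric.
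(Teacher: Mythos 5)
Your proof is correct and follows essentially the same route as the paper's: both arguments rest on the entrywise separability of the Frobenius norm, the paper phrasing it as the chain of inequalities $\|A-A_+\|_F^2=\sum_{a_{ij}<0}|a_{ij}|^2\le\sum_{a_{ij}<0}|a_{ij}-n_{ij}|^2\le\|A-N\|_F^2$ while you phrase it as $n^2$ decoupled scalar minimizations. Your additional observation that the entrywise projection preserves symmetry (so the result specializes to $\mathcal{N}\subset\mathcal{S}^n$) is a point the paper leaves implicit, and it is correct.
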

\begin{proof}
	Let $N=\left(n_{ij}\right)$ be a nonnegative matrix. Then
	\begin{eqnarray*}
		\| A-A_+ \|_F^2&=\sum_{a_{ij}<0}\left| a_{ij}\right|^2\\
		&\le \sum_{a_{ij}<0}\left| a_{ij}-n_{ij}\right|^2\\
		&\le \sum_{i,j}\left| a_{ij}-n_{ij}\right|^2\\
		&=\| A-N \|_F^2.
	\end{eqnarray*}
	This completes the proof.
\end{proof}

Let $J_n$ be the $n\times n$ matrix whose entries are all equal to $\frac{1}{n}$ and let $I_n$ be the $n\times n$ identity matrix in $M_n\left(\mathbb{R}\right).$

Due to \cite{Khoury}, the projection onto $\widehat{\Omega}_n$ is straightforward and is given by the following theorem.
\begin{theorem}
	Let $A \in M_n\left(\mathbb{R}\right)$ and let $B_A$ be an $n\times n$ real matrix defined by
	\begin{equation*}
		B_A = W_nAW_n+J_n,
	\end{equation*}
	where $W_n=I_n-J_n.$ Then $B_A$ is the best approximate in $\widehat{\Omega}_{n}$ to $A$ in the Frobenius norm.
\end{theorem}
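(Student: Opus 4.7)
The plan is to recognize $\widehat{\Omega}_n$ as a closed affine subspace (linear variety) of $M_n(\mathbb{R})$ and then apply the orthogonal projection characterization: a point $B_A \in \widehat{\Omega}_n$ is the unique best approximation to $A$ if and only if $A-B_A$ is Frobenius-orthogonal to the direction subspace
\begin{equation*}
L = \{Y \in M_n(\mathbb{R}) : Ye = 0 \text{ and } Y^T e = 0\}.
\end{equation*}
This is just the affine version of Theorem \ref{th:Ortho_Proj}, as already used in the paper's discussion around \eqref{Proj_on_affine}. So the proof reduces to two verifications: (i) $B_A \in \widehat{\Omega}_n$, and (ii) $\langle A - B_A, Y\rangle = 0$ for every $Y \in L$.

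For (i), the key identity is $W_n e = (I_n - J_n)e = e - e = 0$, which uses only $J_n e = e$. Since $W_n$ is symmetric, $e^T W_n = 0$ as well. Hence
\begin{equation*}
B_A e = W_n A (W_n e) + J_n e = 0 + e = e,
\end{equation*}
and analogously $B_A^T e = W_n A^T (W_n e) + J_n e = e$. Thus $B_A \in \widehat{\Omega}_n$.

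For (ii), I would observe that any $Y \in L$ satisfies
\begin{equation*}
J_n Y = \tfrac{1}{n} e (e^T Y) = 0, \qquad Y J_n = \tfrac{1}{n}(Ye) e^T = 0,
\end{equation*}
so that $W_n Y W_n = (I_n - J_n)Y(I_n - J_n) = Y - YJ_n - J_n Y + J_n Y J_n = Y$. Combined with the cyclic property of the trace and $\operatorname{tr}(J_n Y) = \tfrac{1}{n} e^T Y e = 0$, this gives
\begin{equation*}
\langle B_A, Y\rangle = \operatorname{tr}(W_n A^T W_n Y) + \operatorname{tr}(J_n Y) = \operatorname{tr}(A^T W_n Y W_n) = \operatorname{tr}(A^T Y) = \langle A, Y\rangle,
\end{equation*}
so $\langle A - B_A, Y\rangle = 0$. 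Together with (i), this yields the claim via the affine projection theorem.

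There is no real obstacle here; the only thing to be careful about is isolating the two structural facts doing the work, namely that $W_n$ is the orthogonal projection in $\mathbb{R}^n$ onto $e^\perp$, and that elements of the tangent space $L$ are precisely the matrices annihilated on both sides by $J_n$. Once those are in place, the computation collapses through the cyclic trace identity.
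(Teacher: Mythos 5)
Your proof is correct. Note that the paper itself gives no proof of this statement --- it simply cites Khoury \cite{Khoury} --- so there is no internal argument to compare against; your derivation supplies a complete, self-contained justification. The two verifications you isolate are exactly what is needed: $W_ne=0$ gives $B_A\in\widehat{\Omega}_n$, and the identity $W_nYW_n=Y$ together with $\operatorname{tr}(J_nY)=\tfrac{1}{n}e^TYe=0$ for $Y$ in the direction subspace $L=\{Y\colon Ye=0,\ Y^Te=0\}$ gives the orthogonality $\langle A-B_A,Y\rangle=0$, which by the affine form of Theorem~\ref{th:Ortho_Proj} (the same translation argument the paper uses around \eqref{Proj_on_affine}) characterizes the unique best approximation in the Frobenius norm.
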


\begin{corollary}\label{cor:omega_sym}
	Let $A\in \mathcal{S}^n.$ Then $B_A=W_nAW_n+J_n\in\widehat{\Omega}^s_n,$ where $W_n=I_n-J_n.$
\end{corollary}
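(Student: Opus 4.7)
The plan is very short: combine the previous theorem (which gives $B_A\in\widehat{\Omega}_n$) with a direct check that $B_A$ is symmetric whenever $A$ is, so that $B_A\in\widehat{\Omega}_n\cap\mathcal{S}^n=\widehat{\Omega}_n^s$.

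First I would invoke the preceding theorem to conclude that $B_A=W_nAW_n+J_n$ lies in $\widehat{\Omega}_n$, i.e. $B_A e=e$ and $B_A^T e=e$. Thus only the symmetry of $B_A$ remains to be established.

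For the symmetry, I would observe that both $I_n$ and $J_n$ are symmetric, hence $W_n^T=(I_n-J_n)^T=I_n-J_n=W_n$. Using also $A^T=A$ because $A\in\mathcal{S}^n$, a direct transposition gives
\begin{equation*}
B_A^T=(W_nAW_n+J_n)^T=W_n^T A^T W_n^T+J_n^T=W_nAW_n+J_n=B_A,
\end{equation*}
so $B_A\in\mathcal{S}^n$. Combined with $B_A\in\widehat{\Omega}_n$ this yields $B_A\in\widehat{\Omega}_n^s$, which is exactly the claim.

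There is no real obstacle here; the corollary is an immediate symmetry-preservation observation attached to the projection formula of the previous theorem. The only thing to be careful about is confirming that the two ingredients $I_n$ and $J_n$ defining $W_n$ are both symmetric, which is obvious, so that taking transposes passes through the sandwich $W_nAW_n$ unchanged.
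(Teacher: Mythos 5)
Your proof is correct and is exactly the argument the paper intends: the corollary is stated without proof as an immediate consequence of the preceding theorem, and your two ingredients (membership in $\widehat{\Omega}_n$ from that theorem, plus symmetry of $B_A$ via $W_n^T=W_n$, $J_n^T=J_n$, $A^T=A$) are the natural way to fill it in.
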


A first observation is that the set $\mathcal{N}$ is a convex cone and the set $\widehat{\Omega}_n$ is an affine subspace.
A second observation is that for a matrix $A \in M_n\left(\mathbb{R}\right),$  the best approximate in ${\Omega}_{n}$ to $A$ in the Frobenius norm can be found by applying Theorem~\ref{th::dykstra} so that $C=\Omega_{n}=\widehat{\Omega}_{n}\cap\mathcal{N}$ and thus $r=2, x_0^1=A, x_1^1=B_A, x_2^1=\left( B_A\right) _+,$ and so on. The intended construction is illustrated in the following figure.

\begin{figure}[h]
	\centering
	\includegraphics[width=0.7\linewidth]{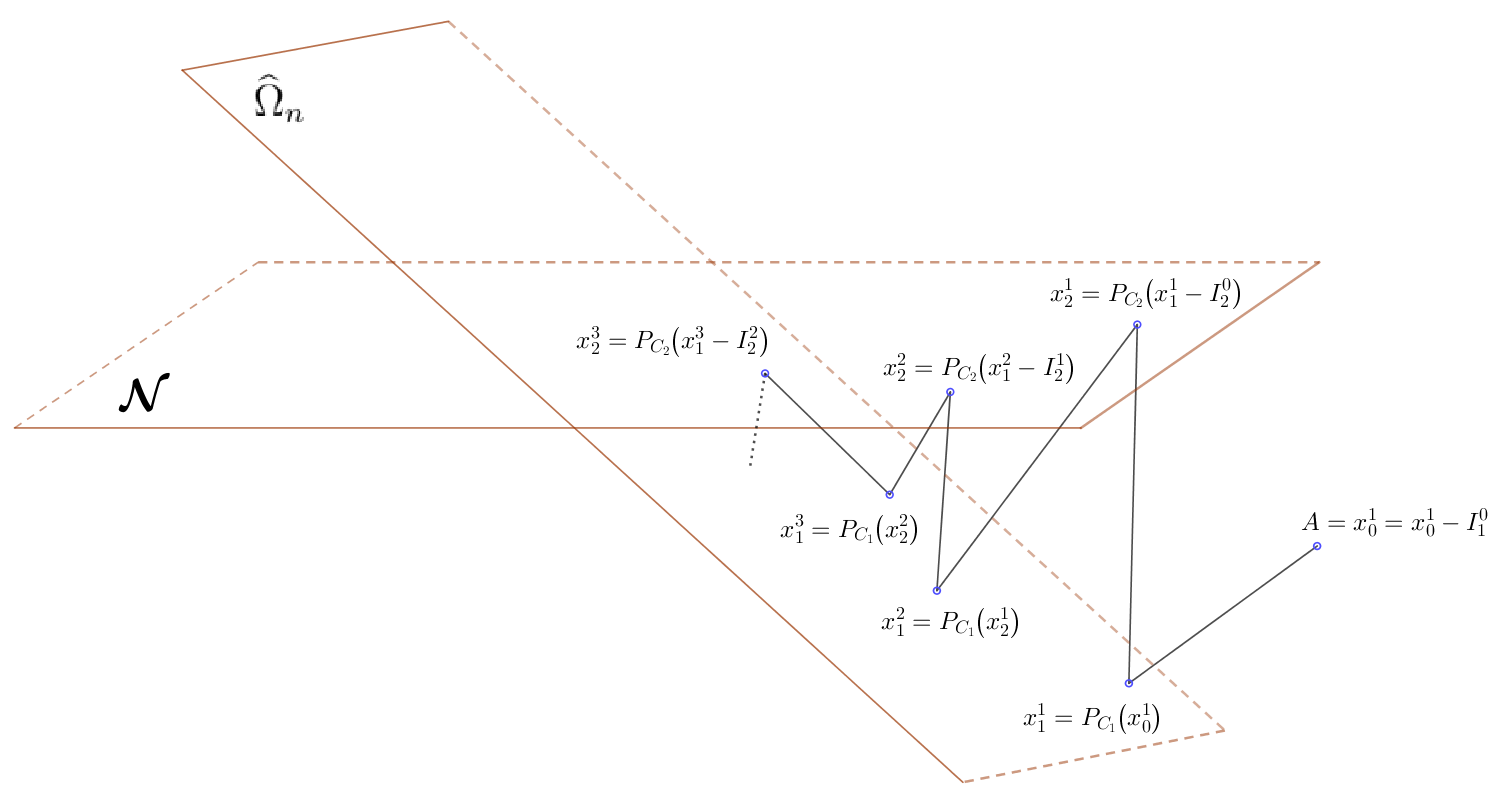}
	\caption[Dykstra's Algorithm onto $\widehat{\Omega}_{n}$ and $\mathcal{N}$]{Dykstra's Algorithm onto $\widehat{\Omega}_{n}$ and $\mathcal{N}$ starting from a chosen matrix $x_0^1\defeq A\in \mathcal{M} $ then projecting alternatively onto the two cones. In this way, a sequence of elements that converges to $P_{\mathcal{N}\cap \widehat{\Omega}_{n}}\left(A\right)$ is generated. More explicitly, the starting point $A$ is projected to $x_1^1$ in $\widehat{\Omega}_{n}$, $x_1^1$ is then projected to $x_2^1$ in $\mathcal{N}$. Next, $x_2^1$ is projected to $x_1^2$ in $\widehat{\Omega}_{n}$ and then $x_1^1$ is projected to $x_2^2$ in $\mathcal{N}$, and so on. }
	\label{fig:Dykstra's r=2}
\end{figure}

Boyle and Dykstra~\cite{dykstra} generalized the simple alternating projections with a clever way to an algorithm which converges to the nearest point in the intersection (of a finite number of closed convex sets) to the initial point.

 Here is a proposed special case of the Dykstra's algorithm with $r=2$ presented in pseudo-code:

\begin{algorithm}[h]
	\caption{Dykstra's algorithm for finding the nearest D.S matrix}
	\label{algo::Real_Dykstra}
	\begin{algorithmic}[1]
		\Require An $n\times n$ real matrix $A$
		\Procedure {Return}{$D.S$}
		\State $X\defeq 0, Y\defeq A, I_2^0\defeq0$
		\State $k\defeq 1$
		\While {$\| X-Y\| \ge \varepsilon$}
		\State $X\defeq WYW+J_n$\Comment{Due to equality~\ref{Proj_on_affine}, since $\widehat{\Omega}_{n}$ is an affine subspace}	
		\State $Y\defeq\left(X-I_2^{k-1} \right)_+$
		\State $I_2^{k}\defeq Y-\left( X-I_2^{k-1}\right) $
		\State $k\defeq k+1$
		\EndWhile
		\EndProcedure\Comment{Output: $P_{\Omega_{n}}\left( A\right)$ is a doubly stochastic matrix}
	\end{algorithmic}
\end{algorithm}

\vspace{4cm}

Based on the previous discussion and Algorithm~\ref{algo::Real_Dykstra}, we propose a hybrid alternating projection method to solve the SDIEP given in the following algorithm:

\begin{algorithm}[h!]
	\caption{Algorithm for finding S.D.S matrix with prescribed spectrum}\label{algo:S.D.S}
	\begin{algorithmic}[1]
		\Require A symmetric nonnegative matrix $A$ and $\mathrm{Spec}\left(A\right) =\lambda$
		\Comment{Due to \cite[p. 195]{Orsi} }
		\Procedure {Return}{$S.D.S$}
		\State $Y\defeq A$
		\While {$\| X-Y\|\ge \varepsilon$}
		\State $Y\defeq V\textnormal{diag}\left( \mu_1,\ldots,\mu_n\right) V^T,$ $\mu_1\ge\cdots\ge \mu_n$ \Comment{A spectral decomposition of $Y$}
		\State $X\defeq V\textnormal{diag}\left( 1,\lambda_2,\ldots,\lambda_n\right) V^T$
		\State $Y\defeq P_{\Omega_{n}}\left( X\right) $ \Comment{Calculating by Algorithm~\ref{algo::Real_Dykstra}}
		\EndWhile
		\EndProcedure\Comment{Output: $X$ is a S.D.S matrix with $\lambda$ as its spectrum}
	\end{algorithmic}
\end{algorithm}
\section{The doubly stochastic problem}
In this section, we deal with the general case. We shall apply the alternating projection method developed in the previous section to give a solution of the DIEP. Throughout this section, $M_n\left(\mathbb{C}\right)$ will be as usual considered as a Hilbert space with the inner product
\begin{equation*}\langle A,B\rangle=\textnormal{ tr}\left( AB^*\right) =\sum_{i,j}a_{ij}\overline{b}_{ij}.
\end{equation*}
The associated norm is the Frobenius norm $\|A\|=\langle A, A\rangle^{\frac{1}{2}}.$

\begin{theorem}[Schur triangularization]
	Let $A \in M_n\left(\mathbb{C}\right)$ have eigenvalues $\mu_1, \ldots , \mu_n$ in any prescribed order. There is a unitary matrix $U \in M_n\left(\mathbb{C}\right)$ such that $U^*AU=T$ is upper triangular matrix with diagonal entries $T_{ii}=\mu_i, i=1,\ldots,n.$
\end{theorem}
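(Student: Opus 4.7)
The plan is to prove the Schur triangularization theorem by induction on $n$, the size of the matrix, where the induction hypothesis is strong enough to respect the prescribed ordering $\mu_1,\ldots,\mu_n$. The base case $n=1$ is immediate since any $1\times 1$ matrix is already upper triangular with its unique eigenvalue on the diagonal, and the trivial unitary $U=[1]$ works.

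For the inductive step, assume the result for dimension $n-1$. Given $A\in M_n(\mathbb{C})$ with prescribed eigenvalue list $\mu_1,\ldots,\mu_n$, I would first pick an eigenvector $v$ associated with $\mu_1$ and normalize it so that $\|v\|=1$. Then extend $\{v\}$ to an orthonormal basis of $\mathbb{C}^n$, and assemble these vectors as the columns of a unitary matrix $U_1=[v\mid V_1]\in M_n(\mathbb{C})$. A direct computation using $Av=\mu_1 v$ shows that
\begin{equation*}
U_1^*AU_1=\begin{pmatrix} \mu_1 & w^* \\ 0 & A_1\end{pmatrix},
\end{equation*}
where $w\in\mathbb{C}^{n-1}$ and $A_1=V_1^*AV_1\in M_{n-1}(\mathbb{C})$. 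Since $U_1^*AU_1$ is block upper triangular, its characteristic polynomial factors as $(x-\mu_1)\det(xI_{n-1}-A_1)$, so the eigenvalues of $A_1$ are precisely $\mu_2,\ldots,\mu_n$ (with multiplicity).

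Applying the induction hypothesis to $A_1$ with the prescribed ordering $\mu_2,\ldots,\mu_n$, I obtain a unitary $U_2\in M_{n-1}(\mathbb{C})$ such that $U_2^*A_1U_2$ is upper triangular with diagonal $(\mu_2,\ldots,\mu_n)$. Setting
\begin{equation*}
U=U_1\begin{pmatrix}1 & 0\\ 0 & U_2\end{pmatrix},
\end{equation*}
one checks that $U$ is a product of two unitaries, hence unitary, and a direct block multiplication shows $U^*AU$ is upper triangular with the diagonal entries $(\mu_1,\mu_2,\ldots,\mu_n)$ in the prescribed order.

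There is no real obstacle here beyond bookkeeping; the only subtlety is making sure that the ordering of eigenvalues is preserved in the induction, which is handled automatically by the freedom in choosing which eigenvector to use at each step. The essential ingredient is the existence of at least one eigenvector over $\mathbb{C}$, which is guaranteed by the fundamental theorem of algebra applied to the characteristic polynomial.
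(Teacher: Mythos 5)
Your proof is correct: the standard deflation argument by induction on $n$ --- pick a unit eigenvector for $\mu_1$, extend to an orthonormal basis to get a block upper triangular form, observe that the trailing block has eigenvalues $\mu_2,\ldots,\mu_n$, and compose unitaries --- is exactly the classical proof, and your handling of the prescribed ordering is the right (and only) subtle point. Note, however, that the paper states Schur triangularization without proof, as a standard result quoted from the literature, so there is no in-paper argument to compare against; your write-up matches the textbook proof one would find in, e.g., Horn and Johnson.
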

We now redefine a few terms from the preceding section.
\\
Let $\lambda = \left(1,\lambda_2, \ldots , \lambda_n\right)$ be a given list of complex eigenvalues.
Define
\begin{equation*}
	\mathcal{T}=\{T\in M_n\left(\mathbb{C}\right)\colon T \textnormal{ is upper triangular with spectrum } \lambda\}.
\end{equation*}
Let $\mathcal{M}$ be the set of all $n\times n$ complex matrices with spectrum $\lambda$ i.e.
\begin{equation*}
	\mathcal{M}=\{ A\in M_n\left(\mathbb{C}\right)\colon A=UTU^* \textnormal{ for some unitary } U \textnormal{ and some } T\in \mathcal{T} \}.
\end{equation*}
Recall that  $ \textbf{N} $ be the set of nonnegative $n\times n$ matrices, defined by
\begin{equation*}
	\textbf{N}=\{A\in M_n\left(\mathbb{R}\right)\colon a_{ij}\geq 0\quad \forall  \ i,j\}.
\end{equation*}
Also recall that
\begin{equation*}
	\widehat{\Omega}_{n}=\{A\in M_n\left(\mathbb{R}\right)\colon Ae=e \textnormal{ and } A^Te=e\},
\end{equation*}
and
\begin{equation*}
	{\Omega}_{n}=\widehat{\Omega}_n\cap \textbf{N}=\{A\in M_n\left(\mathbb{R}\right) \colon Ae=e, A^Te=e \textnormal{ and } a_{ij}\geq 0\quad \forall \ i,j\}.
\end{equation*}
These leads to solve the DIEP with prescribed eigenvalues $\lambda$ via finding an $n\times n$ matrix $X$ that belongs to the intersection of $\mathcal{M}, \textbf{N}$ and $\widehat{\Omega}_{n}.$ Thus, the following problem is considered:
\begin{equation*}
	\textnormal{ Find } X\in \mathcal{M}\cap\left(\textbf{N}\cap\widehat{\Omega}_{n}\right).
\end{equation*}
Again our procedure here also depends on alternative projections by finding a best projection onto two sets $\mathcal{M}$ and $\left(\textbf{N}\cap\widehat{\Omega}_{n}\right)$ instead of the three sets $\mathcal{M}, \textbf{N}$ and $\widehat{\Omega}_n.$

We apply Boyle and Dykstra's alternating projections to find the nearest doubly stochastic matrix to a given matrix which means that the obtained matrix is the projection of a given matrix onto $\left(\textbf{N}\cap\widehat{\Omega}_{n}\right).$

Next we need the following result from \cite{Orsi}.
\begin{definition}\label{def:P_M}
	Suppose $U\in M_n\left(\mathbb{C}\right)$ is unitary and $T \in M_n\left(\mathbb{C}\right)$ is upper triangular. Let $\left\{\widehat{\lambda}_1,\ldots,\widehat{\lambda}_n\right\}$be a permutation of the list of eigenvalues $\lambda$ such that, among all possible permutations, it minimizes
	\begin{equation*} \sum_{i=1}^{n}\|\widehat{\lambda}_i-T_{ii}\|^2.
	\end{equation*}
\end{definition}
Define
\begin{equation}\label{P_M}
	P_{\mathcal{M}}\left( U,T\right) =U\widehat{T}U^*,
\end{equation}
where $\widehat{T}\in \mathcal{T}$ is given by
\begin{equation*}
	\widehat{T}_{ij}=
\cases{\widehat{\lambda}_i,  &\textnormal{if } i=j,\\
		t_{ij}, & \textnormal{otherwise}.}
\end{equation*}
\begin{lemma}[see~\cite{Orsi}]
	Let $UTU^* \in M_n\left(\mathbb{C}\right)$ be a Schur triangularization of matrix $A.$ Then $P_{\mathcal{M}}(U,T)$ satisfies
	\begin{equation*}
		\| A-P_{\mathcal{M}}(U,T)\| \leq \| A-U\tilde{T}U^*\|\quad\textnormal{ for all }\tilde{T}\in \mathcal{T}.
	\end{equation*}
\end{lemma}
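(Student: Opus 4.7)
The strategy is to exploit the unitary invariance of the Frobenius norm to reduce the problem to a minimization purely in terms of the triangular data $T$ versus $\tilde T$, and then to decompose that distance into a ``strictly upper triangular'' part (which can be matched exactly and thus contributes nothing to the minimum) and a ``diagonal'' part (which is exactly the combinatorial minimization built into the definition of $\widehat T$).

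Concretely, I would proceed as follows. First, using $\|UXU^*\|=\|X\|$ for any unitary $U$, write
\begin{equation*}
\| A-U\tilde T U^*\|^2 \;=\; \| UTU^* - U\tilde T U^*\|^2 \;=\; \|T-\tilde T\|^2
\end{equation*}
for every $\tilde T\in\mathcal T$, and likewise $\|A-P_{\mathcal M}(U,T)\|^2 = \|T-\widehat T\|^2$. Next, since both $T$ and $\tilde T$ are upper triangular, split the Frobenius inner sum into the strictly upper triangular part and the diagonal part:
\begin{equation*}
\|T-\tilde T\|^2 \;=\; \sum_{i<j} |T_{ij}-\tilde T_{ij}|^2 \;+\; \sum_{i=1}^{n} |T_{ii}-\tilde T_{ii}|^2.
\end{equation*}
Observe that for $\tilde T\in\mathcal T$ the strictly upper triangular entries $\tilde T_{ij}$, $i<j$, are unconstrained in $\mathbb C$ (the spectrum of an upper triangular matrix depends only on its diagonal), so the first sum is nonnegative and equals zero if and only if $\tilde T_{ij}=T_{ij}$ for all $i<j$; this is exactly the choice encoded in the definition of $\widehat T$.

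It then remains to minimize $\sum_i |T_{ii}-\tilde T_{ii}|^2$ over all admissible diagonals, that is over all $n$-tuples $(\tilde T_{11},\ldots,\tilde T_{nn})$ that are permutations of $(1,\lambda_2,\ldots,\lambda_n)$. But by Definition~\ref{def:P_M}, the permutation $(\widehat\lambda_1,\ldots,\widehat\lambda_n)$ used to form $\widehat T$ is precisely the one that minimizes $\sum_{i=1}^n |\widehat\lambda_i - T_{ii}|^2$. Putting the two pieces together yields $\|T-\widehat T\|^2 \le \|T-\tilde T\|^2$ for every $\tilde T\in\mathcal T$, and undoing the unitary conjugation gives the desired inequality.

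I do not expect a genuine obstacle here; the only step that requires any care is the justification that off-diagonal entries of elements of $\mathcal T$ may be chosen freely without affecting the spectrum, which is immediate from the upper triangular form. The combinatorial minimization over permutations is not performed explicitly, since it is absorbed into the definition of $\widehat T$.
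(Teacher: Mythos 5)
Your proof is correct and follows essentially the same route as the source: the paper states this lemma without proof (citing Orsi), and the argument there is precisely your reduction by unitary invariance to $\|T-\tilde T\|^2$, followed by the decoupled minimization of the strictly upper triangular part (matched exactly by $\widehat T$, since those entries of $\tilde T\in\mathcal T$ are unconstrained and the lower triangular entries of both matrices vanish) and of the diagonal part (minimized over permutations of $\lambda$ by Definition~\ref{def:P_M}). There is no gap.
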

Next, we shall denote by $\Re(z)$ and $\Im(z)$ to be  respectively the real part and the imaginary part of a complex number $z$.
\begin{lemma}
	Let $A\in M_n\left(\mathbb{C}\right).$ Then $\Re\left(A\right)$ is the best approximate in $M_n\left(\mathbb{R}\right)$ to $A$ in the Frobenius norm.
\end{lemma}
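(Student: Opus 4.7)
The plan is to reduce the matrix inequality to an entry-wise optimization over real scalars, exploiting the fact that the Frobenius norm decouples across entries. Writing $a_{ij} = \alpha_{ij} + i\beta_{ij}$ with $\alpha_{ij},\beta_{ij}\in\mathbb{R}$, the matrix $\Re(A)$ has entries $\alpha_{ij}$, so by definition $\|A-\Re(A)\|_F^2 = \sum_{i,j}\beta_{ij}^2$.

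For an arbitrary $B = (b_{ij}) \in M_n(\mathbb{R})$, I would expand
\begin{equation*}
\|A - B\|_F^2 \;=\; \sum_{i,j} |a_{ij}-b_{ij}|^2 \;=\; \sum_{i,j} \bigl[(\alpha_{ij}-b_{ij})^2 + \beta_{ij}^2\bigr],
\end{equation*}
using the elementary identity $|z - r|^2 = (\Re z - r)^2 + (\Im z)^2$ valid for $z\in\mathbb{C}$, $r\in\mathbb{R}$. Since the second sum $\sum_{i,j}\beta_{ij}^2$ does not depend on $B$, minimizing $\|A-B\|_F$ over $B\in M_n(\mathbb{R})$ is equivalent to minimizing $\sum_{i,j}(\alpha_{ij}-b_{ij})^2$ over real scalars $b_{ij}$. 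Each term is minimized (and vanishes) precisely when $b_{ij}=\alpha_{ij}$, and hence the unique minimizer is $B=\Re(A)$, at which point the total equals $\sum_{i,j}\beta_{ij}^2 = \|A-\Re(A)\|_F^2$.

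Concluding, $\|A-\Re(A)\|_F \le \|A-B\|_F$ for every $B\in M_n(\mathbb{R})$, with equality iff $B=\Re(A)$. Alternatively, one could verify this via Kolmogorov's criterion (Theorem~\ref{th:Kolmog_Proj}) applied to the real Hilbert space obtained by restricting scalars: the real part of $\langle A - \Re(A), B - \Re(A)\rangle = \langle i\,\Im(A), B - \Re(A)\rangle = i\,\mathrm{tr}(\Im(A)(B-\Re(A))^T)$ is zero for every real matrix $B$, confirming that $\Re(A)$ is the orthogonal projection onto the closed real-linear subspace $M_n(\mathbb{R}) \subset M_n(\mathbb{C})$. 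There is no real obstacle here; the only subtlety to flag is that the underlying inner product is being regarded as a real inner product on the real subspace $M_n(\mathbb{R})$, which is exactly what the entry-wise decomposition makes transparent.
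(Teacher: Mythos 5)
Your proof is correct and follows essentially the same route as the paper: both arguments decompose the Frobenius norm entry-wise and use the identity $|z-r|^2=(\Re z-r)^2+(\Im z)^2$ for $r\in\mathbb{R}$ to see that $\|A-B\|_F^2\ge\sum_{i,j}|\Im(a_{ij})|^2=\|A-\Re(A)\|_F^2$. The extra observations on uniqueness and Kolmogorov's criterion are sound but not needed.
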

\begin{proof}
	Let $B\in M_n\left(\mathbb{R}\right).$ Then
	\begin{eqnarray*}
		\| A-\Re\left(A\right) \|_F^2 &= \sum_{i,j}\left| \Im\left(a_{ij}\right)\right|^2\\
		&\le\sum_{i,j}\left| \Re\left(a_{ij}\right)-b_{ij}+i\Im\left(a_{ij}\right)\right|^2\\
		&=\| A-B \|_F^2.
	\end{eqnarray*}
\end{proof}
\begin{lemma}
	Let $A \in M_n\left(\mathbb{C}\right).$ Then the $n\times n$ real matrix $A_+$ defined by
	\begin{equation*}
		\left( A_+\right)_{ij}=
\cases{	\Re\left(a_{ij}\right), &   \textnormal{ if } $\Re\left(a_{ij}\right)\ge 0$,\\
		 0, &  \textnormal{ if } $\Re\left(a_{ij}\right)<0$,}
	\end{equation*}
	for all $1 \le i, j \le n,$ is the best approximate in $\textbf{N}$ to $A$ in the Frobenius norm.
\end{lemma}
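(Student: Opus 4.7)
The plan is to reduce this complex-to-real-nonnegative projection to the two previous lemmas (the real projection lemma and the nonnegative-matrix projection lemma) by splitting the Frobenius squared distance entrywise into real and imaginary contributions.

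First I would fix an arbitrary $N \in \textbf{N}$ and expand the Frobenius distance entrywise. Since $N$ has real entries $n_{ij}\ge 0$, for each pair $(i,j)$ we have
\begin{equation*}
|a_{ij}-n_{ij}|^{2} \;=\; \bigl|\Re(a_{ij})-n_{ij}\bigr|^{2} + \bigl|\Im(a_{ij})\bigr|^{2}.
\end{equation*}
Summing over $i,j$ yields the Pythagorean-style decomposition
\begin{equation*}
\| A-N\|_{F}^{2} \;=\; \| A-\Re(A)\|_{F}^{2} \;+\; \|\Re(A)-N\|_{F}^{2},
\end{equation*}
in which the first term is independent of the choice of $N$.

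Next I would minimize the remaining term $\|\Re(A)-N\|_{F}^{2}$ over $N \in \textbf{N}$. Since $\Re(A) \in M_n(\mathbb{R})$, this is exactly the real problem solved by Lemma~\ref{th:proj_N}: the unique minimizer is the entrywise positive part of $\Re(A)$, namely the matrix whose $(i,j)$ entry equals $\Re(a_{ij})$ when $\Re(a_{ij})\ge 0$ and $0$ otherwise. By the defining formula this matrix is precisely $A_{+}$.

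Combining the two steps, for every $N\in\textbf{N}$,
\begin{equation*}
\| A-N\|_{F}^{2} \;\ge\; \| A-\Re(A)\|_{F}^{2} + \|\Re(A)-A_{+}\|_{F}^{2} \;=\; \| A-A_{+}\|_{F}^{2},
\end{equation*}
which is the desired inequality. No real obstacle is expected here; the only subtle point is making sure the entrywise split into real and imaginary parts is recognized as a genuine orthogonal decomposition of the squared Frobenius distance whenever $N$ is real, so that the real-part contribution can be handled independently by the previous lemma.
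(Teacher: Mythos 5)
Your proof is correct and rests on the same entrywise split of the squared Frobenius distance into real and imaginary contributions that the paper uses; the paper simply carries out the resulting inequality chain directly on $\| A-A_+\|_F^2$ versus $\| A-N\|_F^2$, whereas you package the real-part step as an application of the earlier Lemma~\ref{th:proj_N} after the Pythagorean decomposition $\| A-N\|_F^2=\| A-\Re(A)\|_F^2+\|\Re(A)-N\|_F^2$. This is only a difference in presentation, not in substance, and your modular version is arguably slightly cleaner.
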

\begin{proof}
	Let $N=\left(n_{ij}\right)$ be a nonnegative matrix. Then
	\begin{eqnarray*}
		\| A-A_+ \|_F^2&=\sum_{\Re\left(a_{ij}\right)<0}\left|\Re\left(a_{ij}\right)\right|^2+ \sum_{i,j}\left| \Im\left(a_{ij}\right)\right|^2\\
		&\le\sum_{\Re\left(a_{ij}\right)<0}\left|\Re\left(a_{ij}\right)-n_{ij}\right|^2+ \sum_{i,j}\left| \Im\left(a_{ij}\right)\right|^2\\
		&\le \sum_{i,j}\left|\Re\left(a_{ij}\right)-n_{ij}\right|^2+\sum_{i,j}\left| \Im\left(a_{ij}\right)\right|^2\\
		&=\| A-N \|_F^2.
	\end{eqnarray*}
	This completes the proof.
\end{proof}
The following proposition will play an important role in our study.
\begin{proposition}
	Let $A \in M_n\left(\mathbb{C}\right).$ Then the best approximate in ${\Omega}_{n}$ to $A$ in the Frobenius norm is the projection of $\Re\left(A\right)$ onto $\Omega_{n}$ which can be found by applying Theorem~\ref{th::dykstra} so that $C=\Omega_{n}=\widehat{\Omega}_{n}\cap\textbf{N}, r=2,$ and $x_0^1=\Re\left( A\right),$ where $\Re\left( A\right) =\left( \Re\left( a_{ij}\right) \right) _{ij}.$
	Thus $x_1^1=B_{\Re\left( A\right) }, x_2^1=\left( B_{\Re\left( A\right) }\right) _+,$ and so on.
\end{proposition}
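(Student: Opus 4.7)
The plan is a two-step reduction: first cut the problem down from $M_n(\mathbb{C})$ to $M_n(\mathbb{R})$, then recognize what remains as a Boyle-Dykstra instance with two closed convex sets.

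For the first step I would exploit that every $S\in\Omega_n$ has real entries. An entrywise expansion then gives
\begin{equation*}
\|A-S\|_F^2=\sum_{i,j}\bigl((\Re a_{ij}-s_{ij})^2+(\Im a_{ij})^2\bigr)=\|\Re(A)-S\|_F^2+\|\Im(A)\|_F^2.
\end{equation*}
Since the imaginary-part term does not depend on $S$, minimizing the left-hand side over $\Omega_n$ is the same as minimizing $\|\Re(A)-S\|_F$ over $\Omega_n$. The unique minimizer is therefore $P_{\Omega_n}(\Re(A))$, which settles the first assertion. Note that this is a direct analogue of the preceding lemma identifying $\Re(A)$ as the best real approximant to $A$; the only additional input is that $\Omega_n\subset M_n(\mathbb{R})$.

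For the second step I would apply Theorem~\ref{th::dykstra} to actually compute $P_{\Omega_n}(\Re(A))$. The ingredients are in place: $\widehat{\Omega}_n$ is a closed affine subspace and $\textbf{N}$ is a closed convex cone of $M_n(\mathbb{R})$, hence both are closed and convex, and $\Omega_n=\widehat{\Omega}_n\cap\textbf{N}$ is nonempty (it contains $J_n$). Taking $r=2$, $C_1=\widehat{\Omega}_n$, $C_2=\textbf{N}$ and initial point $x_0^1=\Re(A)$, Boyle-Dykstra produces a sequence converging strongly to $P_{\Omega_n}(\Re(A))$. Unwinding the recursion (\ref{dyskstra_seq}) with $I_1^0=I_2^0=0$ gives $x_1^1=P_{\widehat{\Omega}_n}(\Re(A))=B_{\Re(A)}$ and $x_2^1=P_{\textbf{N}}(x_1^1)=(B_{\Re(A)})_+$, matching the iterates stated in the proposition; subsequent iterates are governed by (\ref{dyskstra_seq}), and, since $\widehat{\Omega}_n$ is affine, the increment $I_1^n$ may be dropped by (\ref{Proj_on_affine}).

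There is no serious obstacle here; the content of the proof is entirely the orthogonal decomposition of the Frobenius norm into real and imaginary parts combined with the Boyle-Dykstra convergence theorem. The only bookkeeping point worth emphasizing is that the increments attached to the affine set $\widehat{\Omega}_n$ vanish, so the concrete Dykstra recursion here reduces to exactly Algorithm~\ref{algo::Real_Dykstra} applied to $\Re(A)$ rather than to a real starting matrix.
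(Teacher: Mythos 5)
Your proof is correct and follows essentially the same route as the paper: the paper establishes $P_{\Omega_n}(A)=P_{\Omega_n}(\Re(A))$ by the same orthogonal decomposition $\|A-S\|_F^2=\|A-\Re(A)\|_F^2+\|\Re(A)-S\|_F^2$ (phrased via the vanishing cross term with the closed linear space $M_n(\mathbb{R})$ rather than entrywise) and then invokes Theorem~\ref{th::dykstra} exactly as you do. Your added verification of the Dykstra hypotheses and the unwinding of the first iterates is slightly more explicit than the paper's proof, but not a different argument.
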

\begin{proof}
	Let $A\in M_n\left(\mathbb{C}\right),$ $\Re\left( A\right) =P_{M_n\left(\mathbb{R}\right)}\left( A\right)$ and $B=P_{\Omega_{n}}\left( \Re\left( A\right) \right).$
	In order to prove that  $B=P_{\Omega_{n}}\left( A\right),$ we need to show that \begin{equation*}
		\| A-B\|\le \| A-C\|
	\end{equation*}
	for all $C\in \Omega_{n}.$
	
	\begin{eqnarray*}
		\|A-B\|^2&=\|A-\Re\left( A\right)\|^2+\|\Re\left( A\right)-B\|^2-2\langle A-\Re\left( A\right),\Re\left( A\right)- B\rangle\\
		&=\|A-\Re\left( A\right)\|^2+\|\Re\left( A\right)-B\|^2 \quad\left(\textnormal{Since } M_n\left(\mathbb{R}\right) \textnormal{ is a closed linear space}.\right)\\
		&\le \|A-\Re\left( A\right)\|^2+\|\Re\left( A\right)-C\|^2\quad \textnormal{ for all } C\in \Omega_{n}\\
		&= \|A-\Re\left( A\right)\|^2+\|\Re\left( A\right)-C\|^2-2\underbrace{\langle A-\Re\left( A\right),\Re\left( A\right)- C\rangle}_{0}\\
		&=\|A-C\|^2.
	\end{eqnarray*}
	This will complete the proof.
\end{proof}
Here is a proposed algorithm for solving the DIEP given in the following.

\begin{algorithm}[h]
	\caption{Algorithm for finding D.S matrix with prescribed spectrum}
	\begin{algorithmic}[1]
		\Require A nonnegative $A$ and $\mathrm{Spec}\left(A\right) =\lambda$
		\Comment{Due to \cite[p. 198]{Orsi} }
		\Procedure {Return}{$D.S$}
		\State $Y\defeq A$ 	
		\While {$\|\Re\left(X\right)-Y\|\ge \varepsilon$}
		\State $Y\defeq UTU^*$\Comment{A Schur decomposition of $Y$}
		\State $X\defeq P_\mathcal{M}\left( U,T\right)$ \Comment{$P_\mathcal{M}\left( U, T\right)$is given by Definition~\ref{def:P_M}}
		\State $Y\defeq P_{\Omega_{n}}\left( \Re\left( X\right) \right) $ \Comment{Calculating by Dykstra's algorithm}
		\EndWhile
		\EndProcedure\Comment{Output: $Y$ is a D.S matrix with $\lambda$ as its spectrum}
	\end{algorithmic}
\end{algorithm}
\section{Convergence}
This section concludes with a discussion that describes the conditions under which SDIEP and DIEP algorithms converge.

The following theorem illustrates one important use of \cite{Converge_MAP,Convergence_of_MAP}. Another good reference is \cite{Zarantonello_71} and the references therein.
\begin{theorem}\label{th:conv_compact}
	Let $C_1$ be a compact set in a finite dimensional Hilbert space $H$ and let $C_2$ be a closed (nonempty) set in $H.$ Suppose that an algorithm generates two sequences $\{x_i\}_i$ and $\{y_i\}_i$ defined by
	\begin{equation*}
		y_i=P_{C_2}\left( x_i\right) \quad\textnormal{ and }\quad x_{i+1}=P_{C_1}\left( y_{i}\right),
	\end{equation*}
	for all $i=0,1,\ldots\cdot$
	If, in addition, $P_{C_2}$ is a continuous map, then
	\begin{equation*}
		\|x-y\|=\lim\limits_{i\to\infty}\|x_i-y_i\|,
	\end{equation*}
	for some $x\in C_1$ and its projection $y\in C_2.$
\end{theorem}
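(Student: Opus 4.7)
My plan is to prove this by combining two classical ingredients: a monotonicity argument for the distance sequence $\|x_i-y_i\|$, and a subsequence extraction based on the compactness of $C_1$ together with the assumed continuity of $P_{C_2}$.

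First I would establish that the nonnegative sequence $d_i \defeq \|x_i-y_i\|$ is non-increasing, hence convergent. By definition of $y_i$ as a best approximation of $x_i$ in $C_2$, we have $\|x_{i+1}-y_{i+1}\|\le \|x_{i+1}-y_i\|$ since $y_i\in C_2$. In turn, $x_{i+1}$ is a best approximation of $y_i$ in $C_1$, so $\|x_{i+1}-y_i\|\le \|x_i-y_i\|$ because $x_i\in C_1$. Chaining these two inequalities yields $d_{i+1}\le d_i$, and since $d_i\ge 0$, the sequence $\{d_i\}$ converges to some $d\ge 0$.

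Next, I would invoke the compactness of $C_1$ to extract a subsequence $\{x_{i_k}\}$ with $x_{i_k}\to x$ for some $x\in C_1$. Applying the hypothesis that $P_{C_2}$ is continuous gives
\begin{equation*}
y_{i_k}=P_{C_2}(x_{i_k})\longrightarrow P_{C_2}(x)\defeq y\in C_2.
\end{equation*}
Since the norm is continuous on $H\times H$, we have $\|x_{i_k}-y_{i_k}\|\to\|x-y\|$. But the full sequence $\{d_i\}$ already converges to $d$, so every subsequence has the same limit, forcing $\|x-y\|=d=\lim_{i\to\infty}\|x_i-y_i\|$, with $y=P_{C_2}(x)$ by construction. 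This delivers exactly the claim.

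The only delicate point is the continuity hypothesis on $P_{C_2}$: without it, the projection could jump when $x_{i_k}$ approaches a point with multiple nearest points in $C_2$, and one would only get that any cluster point $y$ of $\{y_{i_k}\}$ lies in $C_2$ with $\|x-y\|=d$, but not that $y=P_{C_2}(x)$. Because the theorem supplies continuity of $P_{C_2}$ as an assumption, this obstacle is neutralized, and the compactness of $C_1$ (rather than of $C_2$) suffices to anchor the subsequence extraction. No control on the sequence $\{y_i\}$ itself (such as its own compactness or convergence) is needed beyond what continuity of $P_{C_2}$ transfers from $\{x_{i_k}\}$.
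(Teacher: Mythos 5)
Your proposal is correct and follows essentially the same route as the paper: extract a convergent subsequence $x_{i_k}\to x$ from the compact set $C_1$, push it through the assumed continuity of $P_{C_2}$ to get $y_{i_k}\to y=P_{C_2}(x)$, and identify $\|x-y\|$ with the limit of the full sequence $\|x_i-y_i\|$. The only difference is that you prove the convergence of $\|x_i-y_i\|$ directly via the two best-approximation inequalities, whereas the paper outsources this step to its Proposition on non-increasing distances; your self-contained argument is, if anything, a cleaner justification of that step.
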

\begin{proof}
	Since $C_1$ is compact and $x_i \in C_1,$ it follows that $ \left\{x_i\right\}_i$ has at least one accumulation point in $C_1.$ Moreover, it has a convergent subsequence $\left\{x_{i_k}\right\}$ such that
	\begin{equation}\label{eq:4.1}
		\lim\limits_{k\to\infty}x_{i_k}=x,
	\end{equation}
	for some $x\in C_1.$ Relying on continuity of projection $P_{C_2},$ it is possible to say that
	\begin{eqnarray}\label{eq:4.2}
		\lim\limits_{k\to\infty}P_{C_2} \left(x_{i_k}\right) &= P_{C_2} \left(x\right)\nonumber\\
		\lim\limits_{k\to\infty} y_{i_k}&=y,
	\end{eqnarray}
	where $y=P_{C_2}\left( x\right)\in C_2.$
	
	Proposition~\ref{Prop:Reduc_Dist} implies $\lim\limits_{i\to\infty}\|x_{i}-y_{i}\|$ exists. Then, the equalities~\ref{eq:4.1} and \ref{eq:4.2} imply
	\begin{equation*}
		\lim\limits_{i\to\infty}\| x_{i}-y_{i}\|=\lim\limits_{k\to\infty}\| x_{i_k}-y_{i_k}\|=\| x-y\|.
	\end{equation*}
	This completes the proof of the theorem.
\end{proof}
To convert Theorem~\ref{th:conv_compact} to the context of the DIEP, we need to establish that projection operators on convex sets have the following crucial properties.
\begin{proposition}[see~\cite{Zarantonello_71}]\label{Proj_Continuous}
	If $C$ is a closed and convex set in a Hilbert space, then the projection operator $P_C\colon H \mapsto C$ is non-expansive; i.e., for all $x, y \in H$
	\begin{equation*}
		\|P_C(y)-P_C(x)\|\le \|y-x\|.
	\end{equation*}
	Furthermore, $P_C$ is a continuous map.
\end{proposition}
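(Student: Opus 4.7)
The plan is to deduce both non-expansivity and continuity from Kolmogorov's criterion (Theorem~\ref{th:Kolmog_Proj}), since continuity will follow immediately once the non-expansive bound is in hand.

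First I would set $u = P_C(x)$ and $v = P_C(y)$ for arbitrary $x, y \in H$. By Kolmogorov's criterion applied at $x$ with the test point $v \in C$, I obtain $\langle x - u,\, v - u\rangle \le 0$; applied at $y$ with the test point $u \in C$, I obtain $\langle y - v,\, u - v\rangle \le 0$. Adding these two inequalities and rearranging groups the cross terms so that the inner product $\langle (x - y) + (v - u),\, v - u\rangle \le 0$, which upon expanding gives
\begin{equation*}
\|v - u\|^2 \le \langle y - x,\, v - u\rangle.
\end{equation*}

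Next I would apply the Cauchy--Schwarz inequality to the right-hand side, yielding $\|v - u\|^2 \le \|y - x\|\cdot\|v - u\|$. If $\|v - u\| = 0$ the non-expansive bound is trivial; otherwise I divide through to conclude $\|P_C(y) - P_C(x)\| \le \|y - x\|$, which is exactly the claimed non-expansivity.

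For the second assertion, continuity is immediate: the non-expansive bound shows that $P_C$ is Lipschitz with constant $1$, so given $\varepsilon > 0$ one may take $\delta = \varepsilon$ at every point. The only subtle step in this plan is the algebraic combination of the two Kolmogorov inequalities and the correct sign-tracking when grouping $\langle x - u,\, v - u\rangle + \langle y - v,\, u - v\rangle$, but this is routine rather than a genuine obstacle; the substantive mathematical content (uniqueness and variational characterization of the projection) has already been packaged into Theorem~\ref{th:Kolmog_Proj}, on which the argument rests entirely.
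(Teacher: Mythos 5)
Your proof is correct: the two Kolmogorov inequalities at $x$ and $y$ with the swapped test points, added and rearranged to $\|P_C(y)-P_C(x)\|^2 \le \langle y-x,\, P_C(y)-P_C(x)\rangle$, followed by Cauchy--Schwarz and division (handling the zero case separately), is exactly the classical argument, and continuity does follow immediately from the resulting Lipschitz bound with constant $1$. The paper itself gives no proof of this proposition --- it is stated with a citation to Zarantonello --- so there is nothing internal to compare against; your argument is the standard one from the cited literature and rests correctly on Theorem~\ref{th:Kolmog_Proj} as stated earlier in the paper.
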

Next, we have the following result.
\begin{corollary}
	Suppose that the $\left\{x_i\right\}_i$ is a sequence generated by alternating projections onto the compact set $\mathcal{M}.$ Then there is an accumulation point $x$ of the sequence $\left\{x_i\right\}_i$ satisfying
	\begin{equation}
		\|x-P_{\Omega_{n}}\left( x\right) \|=\lim\limits_{i\to \infty} \|x_i-P_{\Omega_{n}}\left( x_i\right)\|.
	\end{equation}
\end{corollary}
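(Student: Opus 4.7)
The plan is to view this corollary as an immediate specialization of Theorem~\ref{th:conv_compact} with $C_1 \defeq \mathcal{M}$ and $C_2 \defeq \Omega_n$, so the work reduces to verifying that both sets satisfy the hypotheses of that theorem. The sequence $\{x_i\}_i$ in the statement is the alternating-projection sequence described just before the corollary, so it is already generated by exactly the recursion $y_i = P_{\Omega_n}(x_i)$, $x_{i+1} = P_{\mathcal{M}}(y_i)$ that Theorem~\ref{th:conv_compact} analyses.

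First I would record that $\mathcal{M}$ plays the role of the compact set $C_1$; this is the standing hypothesis of the corollary (and in the symmetric setting $\mathcal{M}$ is in fact the orbit of $\Lambda$ under orthogonal conjugation, hence compact, while in the general setting compactness is assumed directly). Next I would verify that $\Omega_n = \widehat{\Omega}_n \cap \textbf{N}$ is closed and convex: $\widehat{\Omega}_n$ is an affine subspace of $M_n(\mathbb{R})$ and $\textbf{N}$ is the nonnegative cone, so their intersection is the intersection of two closed convex sets, hence closed and convex in the finite-dimensional Hilbert space $M_n(\mathbb{R})$.

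Having established closedness and convexity of $\Omega_n$, I would invoke Proposition~\ref{Proj_Continuous} to conclude that $P_{\Omega_n}\colon H \to \Omega_n$ is non-expansive and, in particular, continuous. With the continuity of the projection onto $C_2 = \Omega_n$ in hand, all hypotheses of Theorem~\ref{th:conv_compact} are satisfied, and applying it directly yields an accumulation point $x \in \mathcal{M}$ and corresponding $y = P_{\Omega_n}(x) \in \Omega_n$ such that
\begin{equation*}
\lim_{i\to\infty}\|x_i - y_i\| = \|x - y\| = \|x - P_{\Omega_n}(x)\|,
\end{equation*}
which is exactly the asserted equality.

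There is essentially no obstacle beyond the verification of hypotheses: the existence of the limit on the right-hand side was already extracted in the proof of Theorem~\ref{th:conv_compact} via Proposition~\ref{Prop:Reduc_Dist}, and the identification of the limit with $\|x - P_{\Omega_n}(x)\|$ uses only the continuity of $P_{\Omega_n}$ along the convergent subsequence $x_{i_k} \to x$ guaranteed by compactness of $\mathcal{M}$. The only point that would merit a brief sentence in a fully written proof is to confirm that the specific recursion used to define $\{x_i\}_i$ in the corollary matches the one in Theorem~\ref{th:conv_compact}, i.e.\ that we are indeed performing alternating projections between $\mathcal{M}$ and $\Omega_n$ (rather than the inner Dykstra iteration used to compute $P_{\Omega_n}$), so that the theorem applies verbatim.
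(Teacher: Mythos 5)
Your proposal is correct and follows essentially the same route as the paper: the paper's own proof likewise notes that $\Omega_{n}$ is closed and convex, invokes Proposition~\ref{Proj_Continuous} to get continuity of $P_{\Omega_{n}}$, and then applies Theorem~\ref{th:conv_compact} directly. Your additional remarks on the compactness of $\mathcal{M}$ and on matching the recursion to that of the theorem are sensible checks that the paper leaves implicit.
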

\begin{proof}
	It suffices to show that $P_{\Omega_{n}}$ is a continuous as a linear operator. Since $\Omega_{n}$ is a closed convex set, this implies that $P_{\Omega_{n}}$ is continuous by Proposition~\ref{Proj_Continuous}. Therefore, the result follows immediately from Theorem~\ref{th:conv_compact}.
\end{proof}
The above corollary asserts that every accumulation point of $\left\{x_i\right\}_i$ is a solution of the DIEP (SDIEP) with prescribed eigenvalues whenever the limit vanishes.

\subsection*{Semi-algebraic sets}
A short summary of semi-algebraic sets with definitions and most of the properties that we will need can be found in \cite[Section\,1]{Robert_1980}.
A subset $\mathcal{A}$ of $\mathbb{R}^n$ is semi-algebraic if there exist a finite number of polynomials $f_{i,j}, g_{i,j}$ in $n$ variables with real coefficients for $i\in\{1,\ldots,I\}, j\in\{1,\dots,J\},$ so that
\begin{equation*}
	\mathcal{A}=\cup_{i=1}^{I}\cap_{j=1}^{J}\{x\in \mathbb{R}^n\colon f_{i,j}\left( x\right) >0, g_{i,j}\left( x\right) =0\}.
\end{equation*}
In other words, a semi-algebraic set is defined by finitely many systems of equations and/or inequalities of polynomials.
A semi-algebraic set $\mathcal{A}\subset\mathbb{R}^n$ is called bounded if it is contained in a ball of finite radius. The following theorem illustrates one important use of semi-algebraic sets.
\begin{theorem}[Semi-algebraic intersections, \cite{Drusvyatskiy_2015}] \label{th:Semi-alg-Alt_proj}
	Consider two nonempty closed semi-algebraic subsets $X, Y$ of an Euclidean space $E$ such that $X$ is bounded. If the method of alternating projections starts in $Y$ and near $X,$ then the distance of the iterates to the intersection $X \cap Y$ converges to zero, and hence every limit point lies in $X \cap Y.$
\end{theorem}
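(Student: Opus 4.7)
The plan is to recast alternating projections between the two semi-algebraic sets $X$ and $Y$ as a descent method on a single semi-algebraic coupling function, and then apply the Kurdyka--{\L}ojasiewicz (KL) machinery in the style of Attouch--Bolte. Three ingredients will be needed: (a) the KL inequality for semi-algebraic functions, (b) a sufficient-decrease estimate along the iterates, and (c) a matching subgradient bound.

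First, I would introduce the coupling function $\Phi\colon E\times E\to \mathbb{R}\cup\{+\infty\}$ defined by
\begin{equation*}
\Phi(x,y)=\frac{1}{2}\|x-y\|^{2}+\iota_{X}(x)+\iota_{Y}(y),
\end{equation*}
where $\iota_{S}$ denotes the indicator function of a set $S.$ Since $X$ and $Y$ are closed and semi-algebraic, $\Phi$ is proper, lower semicontinuous and semi-algebraic on $E\times E,$ hence satisfies the KL property at every point $\bar z\in\mathrm{dom}\,\Phi$: there exist a neighborhood $U$ of $\bar z,$ constants $c,\eta>0,$ and an exponent $\theta\in[0,1)$ such that
\begin{equation*}
|\Phi(z)-\Phi(\bar z)|^{\theta}\le c\cdot \mathrm{dist}\bigl(0,\partial\Phi(z)\bigr)
\end{equation*}
for all $z\in U$ with $\Phi(\bar z)<\Phi(z)<\Phi(\bar z)+\eta,$ where $\partial$ denotes the limiting subdifferential.

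Next, along the sequence $(x_{k},y_{k})$ generated by $x_{k+1}=P_{X}(y_{k})$ and $y_{k+1}=P_{Y}(x_{k+1}),$ I would establish the descent and subgradient estimates
\begin{equation*}
\Phi(x_{k},y_{k})-\Phi(x_{k+1},y_{k+1})\ge \frac{1}{2}\|x_{k+1}-x_{k}\|^{2}+\frac{1}{2}\|y_{k+1}-y_{k}\|^{2},
\end{equation*}
and
\begin{equation*}
\mathrm{dist}\bigl(0,\partial\Phi(x_{k+1},y_{k+1})\bigr)\le C\bigl(\|x_{k+1}-x_{k}\|+\|y_{k+1}-y_{k}\|\bigr)
\end{equation*}
for some constant $C.$ The subgradient bound comes directly from the first-order optimality conditions at the two projection subproblems, while the sufficient-decrease bound follows from the defining inequality of each projection combined with a local prox-regularity property available on a neighborhood of any sufficiently regular point of $X\cap Y.$ Together with the KL inequality, the Attouch--Bolte scheme then converts these three estimates into the finite-length bound $\sum_{k\ge 0}\bigl(\|x_{k+1}-x_{k}\|+\|y_{k+1}-y_{k}\|\bigr)<\infty,$ so $(x_{k},y_{k})$ is Cauchy and converges to some limit $(x_{\infty},y_{\infty})\in X\times Y.$ Finally, the hypothesis that the iteration starts in $Y$ and near $X$ (together with compactness of $X$) ensures $\Phi(x_{0},y_{0})$ is small enough that the entire tail stays inside a KL neighborhood of a point $(x^{*},x^{*})$ with $x^{*}\in X\cap Y,$ forcing $\|x_{\infty}-y_{\infty}\|=0,$ hence $x_{\infty}=y_{\infty}\in X\cap Y$ and $\mathrm{dist}\bigl((x_{k},y_{k}),X\cap Y\bigr)\to 0.$

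The main obstacle is the interaction between the non-convexity of $X$ and the localization of the KL neighborhood. For a convex set the firm non-expansiveness of the projection gives the sufficient-decrease estimate for free, whereas for a general closed semi-algebraic $X$ one must invoke a prox-regularity or separability argument valid only on a neighborhood of the intersection; this in turn is what makes the quantitative meaning of ``near $X$'' delicate, since one must control how small the starting distance has to be in terms of the KL constants at a point of $X\cap Y$ so that the whole orbit never leaves the region of validity of both the descent estimate and the KL inequality. A secondary subtlety, already flagged in the paper's convention for $P_{C}(x),$ is that $P_{X}$ may be set-valued; any measurable selection suffices for the proof, since all the estimates above depend only on one-step displacements.
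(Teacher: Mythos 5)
First, a point of comparison: the paper does not prove this statement at all --- it is quoted verbatim from \cite{Drusvyatskiy_2015} and used as a black box, so there is no in-paper argument to measure yours against. Your overall strategy of recasting the iteration as a descent method on a semi-algebraic coupling function and invoking the Kurdyka--{\L}ojasiewicz property is at least in the same family as the argument actually given by Drusvyatskiy, Ioffe and Lewis, who likewise rely on {\L}ojasiewicz-type machinery for semi-algebraic data (applied, roughly, to a one-variable function such as $d_X+\iota_Y$ together with a slope estimate for the projection step and a uniformization over the compact set $X$). So the spirit is right, but the specific implementation has a genuine hole.

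The hole is the sufficient-decrease inequality $\Phi(x_k,y_k)-\Phi(x_{k+1},y_{k+1})\ge \tfrac12\|x_{k+1}-x_k\|^2+\tfrac12\|y_{k+1}-y_k\|^2.$ Expanding $\|x_k-y_k\|^2$ shows that the decrease produced by the $X$-half-step equals $\tfrac12\|x_k-x_{k+1}\|^2+\langle x_k-x_{k+1},\,x_{k+1}-y_k\rangle,$ and the cross term is nonnegative only via Kolmogorov's criterion, i.e.\ under convexity (or prox-regularity with constants compatible with the step size); the analogous remark applies to the $Y$-half-step. For a general closed semi-algebraic $X$ the estimate fails outright: take $E=\mathbb{R},$ $X=\{0,2\},$ $Y=\mathbb{R},$ $x_k=0,$ $y_k=1.1;$ then $x_{k+1}=2,$ $y_{k+1}=2,$ the total decrease of $\Phi$ is $0.605,$ while the claimed lower bound is $2.405.$ Moreover, closed semi-algebraic sets need not be prox-regular at a prescribed point of $X\cap Y$ (the union of two crossing lines at the crossing point is semi-algebraic and not prox-regular there), and the theorem imposes no regularity on the point of $X\cap Y$ near which the iteration starts, so appealing to ``prox-regularity at any sufficiently regular point'' does not close the gap. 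A second unaddressed issue: even granting the Attouch--Bolte scheme, it delivers convergence to a critical point of $\Phi,$ and critical points of $\Phi$ are pairs $(x,y)\in X\times Y$ for which $y-x$ is a limiting normal to $X$ at $x$ and $x-y$ is a limiting normal to $Y$ at $y;$ such pairs need not satisfy $x=y.$ Excluding them for small initial gap requires a separate semi-algebraic Sard-type argument about small positive critical values of the coupling function near the compact set $X,$ and that argument --- which is the technical heart of the cited proof --- is missing from your sketch.
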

The following corollary will play an important role in our study of the convergence of both SDIEP and DIEP algorithms.
\begin{corollary}
	Let $\left\{x_i\right\}_i$ and $\left\{y_i\right\}_i$ be the alternating projection sequences defined iteratively as follows:
	\begin{equation*}
		x_{i+1} = P_{\Omega_{n}} (y_i), y_{i+1} = P_{\mathcal{M}} (x_{i+1}).
	\end{equation*}
	Then $\|x_{i}-y_{i}\|$ converges to zero, and hence SDIEP and DIEP algorithms converge.
\end{corollary}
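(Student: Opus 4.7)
The natural strategy is to apply the semi\-algebraic alternating projection theorem (Theorem~\ref{th:Semi-alg-Alt_proj}) with $X := \Omega_n$ and $Y := \mathcal{M}$, so that by construction $x_i \in X$ and $y_i \in Y$. Once $d(x_i, X \cap Y) \to 0$ is established, the bound
\[
\|x_i - y_i\| \;=\; d\!\left(x_i,\mathcal{M}\right) \;\le\; d\!\left(x_i, X \cap Y\right) \;\longrightarrow\; 0
\]
is immediate, since $X \cap Y \subseteq Y$ and $y_i = P_{\mathcal{M}}(x_i)$; this vanishing is exactly the stopping criterion encoded in the SDIEP and DIEP algorithms. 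Compactness of $\Omega_n$ gives at least one accumulation point $x^\ast$ of $\{x_i\}$, and since $d(x^\ast, \mathcal{M}\cap\Omega_n)=0$ and the intersection is closed, $x^\ast$ lies in $\mathcal{M}\cap\Omega_n$ and therefore solves the inverse eigenvalue problem.

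The real work is to check the hypotheses of Theorem~\ref{th:Semi-alg-Alt_proj}. First, $\Omega_n$ is the intersection of the affine space $\widehat{\Omega}_n$ with the half\-spaces $\{a_{ij}\ge 0\}$, so it is closed and semi\-algebraic, and the forced bounds $0 \le a_{ij} \le 1$ give boundedness. Second, by the Newton-Girard identities the condition ``$A$ has spectrum $\lambda$'' is equivalent to requiring that the coefficients of the characteristic polynomial $\det(x I_n - A)$ coincide with the elementary symmetric polynomials $\sigma_k(\lambda)$; these are polynomial identities in the entries of $A$ (viewing $M_n(\mathbb{C})$ as $\mathbb{R}^{2n^2}$ when needed). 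Hence $\mathcal{M}$ is cut out by finitely many polynomial equations and is closed and semi\-algebraic. In the SDIEP case one additionally has $\|A\|_F^2 = \textnormal{tr}(A^2) = \sum_i \lambda_i^2$ constant on $\mathcal{M}$, so $\mathcal{M}$ is even compact.

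The main obstacle is the \emph{local} nature of Theorem~\ref{th:Semi-alg-Alt_proj}: it requires the iteration to be started in $Y$ and sufficiently near $X$. In the symmetric case both sets are compact, so the local hypothesis is harmless once a reasonable seed (e.g.\ a symmetric nonnegative $A$ with the prescribed spectrum, as in the hypotheses of Algorithm~\ref{algo:S.D.S}) is available; the conclusion can additionally be cross\-checked via Theorem~\ref{th:conv_compact} together with the continuity of $P_{\Omega_n}$ supplied by Proposition~\ref{Proj_Continuous}. In the general DIEP case $\mathcal{M}$ is unbounded, because the strictly upper triangular part of the Schur factor $T$ is free, so global convergence is not automatic and the corollary effectively asserts convergence for starting points close enough to $\Omega_n$. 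Quantifying ``close enough'' (via a local error bound at $\mathcal{M}\cap\Omega_n$) lies beyond what Theorem~\ref{th:Semi-alg-Alt_proj} delivers and is the principal technical caveat of the argument.
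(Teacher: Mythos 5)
Your proof follows essentially the same route as the paper: both invoke Theorem~\ref{th:Semi-alg-Alt_proj} with $X=\Omega_n$ and $Y=\mathcal{M}$, after checking that the two sets are closed and semi-algebraic and that $\Omega_n$ is bounded. If anything, you are more careful than the paper on two points: you describe $\mathcal{M}$ by equating the coefficients of $\det\left(xI_n-A\right)$ with the $\sigma_k\left(\lambda\right)$, whereas the paper writes the Cayley--Hamilton-type equation $\sum_{k=0}^{n}(-1)^k\sigma_k\left(\lambda\right)A^{n-k}=0$, which cuts out a strictly larger set (it only forces the minimal polynomial of $A$ to divide $\prod_i\left(x-\lambda_i\right)$, so eigenvalue multiplicities may differ); and you explicitly flag the local ``start in $Y$ and near $X$'' hypothesis of Theorem~\ref{th:Semi-alg-Alt_proj}, which the paper's proof silently drops.
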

\begin{proof}
	By Theorem~\ref{th:Semi-alg-Alt_proj}, it suffices to prove that $\mathcal{M}$ and $\Omega_{n}$ are two semi-algebraic sets and $\Omega_{n}$ bounded. For this purpose, we will use the Newton identities, to write the set $\mathcal{M}$ as
	\begin{equation*}
		\mathcal{M}=\left\{A\in M_n\left(\mathbb{C}\right)\equiv\mathbb{R}^{2n^2} \colon
		0=\sum_{k=0}^{n}(-1)^k\sigma_k\left(\lambda\right)A^{n-k}\right\}
	\end{equation*}
	so that $\mathcal{M}$ is a semi-algebraic set. Moreover, there is a natural way to write $\Omega_{n}$ as
	\begin{equation*}
\hspace{-2cm} \Omega_{n}=\left\{A=\left(a_1,\ldots, a_n,a_{n+1},\ldots, a_{n^2}\right)^T\in M_n\left(\mathbb{R}_{+}\right)\colon \sum_{t=i}^{n}a_t-1=0,\  i=1\textnormal{ mod } n\right\}
	\end{equation*}
	which shows that $\Omega_{n}$ is also a semi-algebraic set. As $\Omega_{n}$ is  bounded, then the proof is complete.
\end{proof}
Next, we shall show the linear convergence of our proposed algorithms. But first, we  begin with the following.
\begin{theorem}[Generic transversality, \cite{Drusvyatskiy_2015}]\label{th:Semi-alg_transver}
	Consider two closed semi-algebraic subsets $M, N$ of a Euclidean space $E.$ Then, for almost every vector $v$ in $E,$ transversality holds at every point in the (possibly empty) intersection of the sets $M$ and $N-v.$
\end{theorem}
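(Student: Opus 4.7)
The plan is to deduce this from a semi-algebraic version of Sard's theorem applied after stratifying both sets into smooth pieces. First, I would invoke the fact that every closed semi-algebraic subset of $E$ admits a finite Whitney stratification into smooth semi-algebraic submanifolds. Write
\begin{equation*}
M=\bigsqcup_{i=1}^{p} M_i, \qquad N=\bigsqcup_{j=1}^{q} N_j,
\end{equation*}
where each $M_i$ and each $N_j$ is a smooth connected semi-algebraic manifold. The goal is then reduced to establishing, for almost every $v\in E$, transversality between the strata $M_i$ and $N_j-v$ at every point of their intersection, for all pairs $(i,j)$.

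For each pair $(i,j)$, introduce the smooth semi-algebraic map
\begin{equation*}
F_{ij}\colon M_i\times N_j\longrightarrow E, \qquad F_{ij}(x,y)=x-y.
\end{equation*}
A direct computation shows that the derivative $dF_{ij}(x,y)$ is surjective if and only if $T_xM_i + T_yN_j = E$; equivalently, $M_i$ and $N_j-(x-y)$ meet transversally at $x$. Thus $v$ is a regular value of $F_{ij}$ precisely when every intersection point of $M_i$ with $N_j-v$ is a transverse intersection point. By the semi-algebraic Sard theorem (which applies since $F_{ij}$ is a smooth semi-algebraic map), the set $C_{ij}\subset E$ of critical values of $F_{ij}$ has Lebesgue measure zero. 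Taking $C=\bigcup_{i,j}C_{ij}$, which is a finite union of measure-zero semi-algebraic sets, gives a measure-zero set outside of which transversality holds at every intersection point of every pair of strata, and hence at every point of $M\cap(N-v)$ in the stratified sense.

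The main obstacle will be reconciling the notion of transversality used in the theorem's statement with the classical smooth-manifold notion applied stratum by stratum. For semi-algebraic sets, the correct transversality condition is phrased in terms of Clarke (or limiting) normal cones, namely $N_M(x)\cap\bigl(-N_{N-v}(x)\bigr)=\{0\}$. One has to verify that, at a point lying in strata $M_i$ and $N_j-v$, this normal-cone condition is equivalent to the smooth transversality $T_xM_i+T_x(N_j-v)=E$; this is standard in variational analysis but requires invoking the Whitney (a)-condition of the stratification to control limiting normals from neighbouring strata. A second, subsidiary point is to confirm that the semi-algebraic Sard theorem is available in the required generality (for semi-algebraic $C^k$ maps with $k$ sufficiently large relative to the dimensions of the strata), which follows from Tarski–Seidenberg and the usual Morse–Sard argument in finite dimensions.
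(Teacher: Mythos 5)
The paper does not prove this theorem: it is quoted verbatim from \cite{Drusvyatskiy_2015} and used as a black box, so there is no in-paper argument to compare yours against. That said, your proposal is essentially the proof given in that reference: stratify $M$ and $N$ into finitely many smooth semi-algebraic manifolds, apply Sard's theorem to the difference map on each product of strata, and take the finite union of the resulting null sets of critical values. Two small corrections. First, a sign: with your convention $F_{ij}(x,y)=x-y$, a point $x\in M_i\cap(N_j-v)$ corresponds to the point $(x,x+v)\in F_{ij}^{-1}(-v)$, so it is $-v$ (not $v$) that must be a regular value; this is harmless, since the set of $v$ for which $-v$ is a critical value is still null. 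Second, the reconciliation step needs only one implication, not the equivalence you assert: Whitney condition (a) yields $N_M(x)\subseteq (T_xM_i)^{\perp}$ (limiting normals to $M$ at a point of the stratum $M_i$ lie in the normal space of that stratum), and likewise for $N-v$, so the tangent condition $T_xM_i+T_x(N_j-v)=E$ forces $N_M(x)\cap\bigl(-N_{N-v}(x)\bigr)\subseteq \bigl(T_xM_i+T_x(N_j-v)\bigr)^{\perp}=\{0\}$. The converse implication is false in general (the limiting normal cone can be strictly smaller than the stratum's normal space), but it is not needed. With those adjustments your argument is complete and matches the cited proof.
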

\begin{theorem}[Linear Convergence, \cite{Lewis_2008}]\label{th:Linear_Conv_manifolds}
	In the Euclidean space $E,$ let $M$ and $N$ be two transverse manifolds around point $\overline{x}\in M\cap N.$ If the initial point $x_0\in E$ is close to $\overline{x},$ then the method of alternating projections \begin{equation*}
		x_{k+1}=P_MP_N\left(x_k\right)\quad \left(k=0,1,2\ldots\right)
	\end{equation*}is well defined, and the distance $d_{M\cap N}\left(x_k\right)$ from the iterate $x_k$ to the intersection $M\cap N$ decreases $Q-$linearly to zero. More precisely, given any constant $c$ strictly larger than the cosine of the angle of intersection the manifolds, $c\left(M,N,\overline{x}\right),$ if $x_0$ is close to $\overline{x},$ then the iterates
	\begin{equation*}
		d_{M\cap N}\left(x_{k+1}\right)\le c\cdot d_{M\cap N}\left(x_k\right) \quad \left(k=0,1,2\ldots\right).
	\end{equation*}
	Furthermore, $x_k$ converges linearly to some point $x^*\in M\cap N\colon$ for some constant $\alpha>0,$
	\begin{equation*}
		\|x_k-x^*\|\le \alpha c^k\quad \left(k=0,1,2\ldots\right).
	\end{equation*}
\end{theorem}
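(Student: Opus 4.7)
The plan is to reduce the problem to a local analysis near $\overline{x}$, compare the nonlinear alternating projection dynamics to an explicit linear model on the tangent spaces, and then absorb the nonlinear error into a slightly worse contraction rate. Since $M$ and $N$ are smooth manifolds through $\overline{x}$, transversality $T_{\overline{x}}M + T_{\overline{x}}N = E$ allows us to invoke the implicit function theorem to flatten $M$ and $N$ locally. In these coordinates, the projections $P_M$ and $P_N$ are well defined and of class $C^1$ on a neighborhood $U$ of $\overline{x}$, and $M\cap N$ is itself a smooth manifold tangent to $T_{\overline{x}}M\cap T_{\overline{x}}N$ at $\overline{x}$.

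First I would handle the linearized problem. Replace $M$ and $N$ by their tangent affine subspaces $\overline{x}+T_{\overline{x}}M$ and $\overline{x}+T_{\overline{x}}N$ and consider alternating projections between these. A classical Friedrichs-angle computation shows that the distance to the intersection contracts by exactly $c_0 \defeq c(M,N,\overline{x})$, which is strictly less than $1$ precisely because transversality forces the normal cones to meet only at the origin. Thus
\begin{equation*}
d_{(T_{\overline{x}}M)\cap(T_{\overline{x}}N)}(\widetilde{x}_{k+1})\le c_0 \cdot d_{(T_{\overline{x}}M)\cap(T_{\overline{x}}N)}(\widetilde{x}_k)
\end{equation*}
for the linear iterates $\widetilde{x}_k$, where the bound is realized by the principal angle between the two tangent spaces.

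Next I would lift this back to the manifolds. By smoothness, $P_M$ and $P_N$ agree with the orthogonal projections onto the tangent affine subspaces up to a quadratic error: for $x$ in $U$,
\begin{equation*}
\|P_M(x) - P_{\overline{x}+T_{\overline{x}}M}(x)\| = O(\|x-\overline{x}\|^2),
\end{equation*}
and similarly for $P_N$. Combining this with the linear contraction and using the equivalence (from transversality) $d_{M\cap N}(x)\asymp \|x-\overline{x}\|$ near $\overline{x}$, I obtain the recursion
\begin{equation*}
d_{M\cap N}(x_{k+1}) \le c_0\, d_{M\cap N}(x_k) + K\, d_{M\cap N}(x_k)^2.
\end{equation*}
Picking any $c>c_0$ and shrinking the neighborhood so that $K\, d_{M\cap N}(x_0)\le c-c_0$, the iteration remains inside the neighborhood and $d_{M\cap N}(x_{k+1})\le c\, d_{M\cap N}(x_k)$, which is the desired $Q$-linear rate.

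Finally, to upgrade from $d_{M\cap N}(x_k)\to 0$ to convergence of $x_k$ to a specific limit $x^*$, I would show the iterates form a Cauchy sequence: the non-expansiveness of $P_M, P_N$ and the triangle inequality give $\|x_{k+1}-x_k\|\le (1+c)\,d_{M\cap N}(x_k)\le (1+c)\,c^k d_{M\cap N}(x_0)$, whose telescoping sum is geometric. Therefore $x_k$ converges to some $x^*\in M\cap N$ with $\|x_k-x^*\|\le \alpha c^k$ for $\alpha=(1+c)d_{M\cap N}(x_0)/(1-c)$. The main obstacle is the second step: making the notion of ``angle between manifolds'' operational, i.e. proving that the cosine of the angle between the tangent spaces governs the exact contraction rate of the linearized alternating projections. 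This boils down to the Friedrichs-angle identity together with the fact that under transversality the sum $T_{\overline{x}}M + T_{\overline{x}}N$ is closed with a bounded inverse to the addition map, which is precisely what the quantitative form of transversality supplies.
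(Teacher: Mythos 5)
This statement is not proved in the paper at all: it is quoted verbatim from Lewis and Malick \cite{Lewis_2008} and used as a black box, so there is no internal proof to compare against. Judged on its own, your sketch follows the standard strategy of the cited source (linearize onto tangent spaces, contract at the Friedrichs-angle rate, absorb curvature into a slightly worse constant, then telescope to get a Cauchy sequence), but it has one genuine gap. The claimed equivalence $d_{M\cap N}(x)\asymp\|x-\overline{x}\|$ near $\overline{x}$ is false: any point $x\in M\cap N$ with $x\ne\overline{x}$ has $d_{M\cap N}(x)=0$ but $\|x-\overline{x}\|>0$. Consequently your quadratic error term, which is genuinely $O(\|x_k-\overline{x}\|^2)$ when you compare $P_M$ with the projection onto the tangent plane \emph{at $\overline{x}$}, cannot be converted into $K\,d_{M\cap N}(x_k)^2$, and the recursion $d_{M\cap N}(x_{k+1})\le c_0\,d_{M\cap N}(x_k)+K\,d_{M\cap N}(x_k)^2$ does not follow as written. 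The repair, which is what Lewis--Malick actually do, is to linearize not at the fixed base point $\overline{x}$ but at the nearest point of $M\cap N$ (or of the respective manifolds) to the current iterate, so that the curvature error is quadratic in $d_{M\cap N}(x_k)$ itself; this in turn requires the continuity of the angle $c(M,N,y)$ in $y$, which holds because the tangent spaces vary continuously and transversality is an open condition.

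Two smaller points. First, the quadratic approximation $\|P_M(x)-P_{\overline{x}+T_{\overline{x}}M}(x)\|=O(\|x-\overline{x}\|^2)$ and the single-valuedness and $C^1$ regularity of $P_M$ near $\overline{x}$ require $M$ to be at least $C^2$ (as assumed in \cite{Lewis_2008}); you should state this hypothesis. Second, in the final Cauchy-sequence step you invoke ``non-expansiveness'' of $P_M$ and $P_N$, but these are projections onto non-convex sets, so they are not non-expansive; the correct substitute is that they are locally Lipschitz near $\overline{x}$ with constant arbitrarily close to $1$ (their derivative at a point of the manifold is the orthogonal projection onto the tangent space), which still yields $\|x_{k+1}-x_k\|\le(1+L)\,d_{M\cap N}(x_k)$ and hence the geometric telescoping sum. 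With these repairs your outline matches the argument of the cited reference.
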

As a conclusion, we have  the following result.
\begin{corollary}
	Let $\left\{x_i\right\}_i$ and $\left\{y_i\right\}_i$ be the alternating projection sequences defined iteratively as follows:
	\begin{equation*}
		x_{i+1} = P_{\Omega_{n}} (y_i), y_{i+1} = P_{\mathcal{M}} (x_{i+1}).
	\end{equation*}
	Then $x_{i}$ converges linearly to some point $x^*\in \mathcal{M}\cap \Omega_{n}$ with rate $c\in]0,1[,$ and hence SDIEP and DIEP algorithms converge.
\end{corollary}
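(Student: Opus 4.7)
The plan is to reduce the claim to the Linear Convergence theorem (Theorem on manifolds, Lewis 2008) stated just above, by establishing that $\mathcal{M}$ and $\Omega_{n}$ satisfy its hypotheses near an intersection point $\overline{x}$. Two ingredients are required: (a) a local manifold structure for each of the two sets in a neighbourhood of $\overline{x}$, and (b) transversality of the two manifolds at $\overline{x}$.

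First, I would argue the manifold structure. The set $\mathcal{M}$ is an algebraic variety cut out by the Cayley--Hamilton polynomial equations (as rewritten in the previous corollary via Newton's identities), and is smooth at any matrix whose eigenvalues are pairwise distinct; since by hypothesis $\overline{x}\in\mathcal{M}\cap\Omega_n$ inherits the prescribed spectrum $\lambda$, we may assume (as is generic) that the entries of $\lambda$ are distinct, so that $\mathcal{M}$ is a smooth manifold locally around $\overline{x}$. For $\Omega_{n}$, the affine constraints $Ae=e$, $A^Te=e$ cut out a linear variety of codimension $2n-1$, and the inequality constraints $a_{ij}\ge 0$ are active only on the boundary; provided $\overline{x}$ lies in the relative interior of $\Omega_{n}$ (equivalently, $\overline{x}$ has strictly positive entries), the set $\Omega_{n}$ coincides locally with the affine space $\widehat{\Omega}_n$, hence is a smooth manifold at $\overline{x}$.

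Next, I would invoke Theorem (Generic transversality) applied to the two closed semi-algebraic sets $\mathcal{M}$ and $\Omega_{n}$, established to be semi-algebraic in the previous corollary. It guarantees that for almost every perturbation vector $v$, transversality of $\mathcal{M}$ and $\Omega_{n}-v$ holds at every point of their intersection. Since our algorithm is iterative and the outcome is not altered qualitatively by an arbitrarily small shift, we may assume without loss of generality that transversality holds at $\overline{x}\in\mathcal{M}\cap\Omega_{n}$. With manifold structure and transversality both in hand, the hypotheses of Theorem (Linear Convergence) are satisfied, so there exists $c\in\,]0,1[$ (strictly larger than the cosine of the transversality angle at $\overline{x}$) such that, if the starting point $y_0$ is close enough to $\overline{x}$, the iterates $x_{i+1}=P_{\Omega_n}(y_i)$ and $y_{i+1}=P_{\mathcal{M}}(x_{i+1})$ satisfy
\begin{equation*}
d_{\mathcal{M}\cap\Omega_n}(x_{i+1})\le c\cdot d_{\mathcal{M}\cap\Omega_n}(x_i),
\end{equation*}
and $x_i$ converges linearly to some $x^*\in\mathcal{M}\cap\Omega_n$ at rate $c$. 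This $x^*$ is precisely a (symmetric in the SDIEP case) doubly stochastic matrix with spectrum $\lambda$, which is exactly the output sought by both algorithms; hence their convergence follows.

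The main obstacle I anticipate is the manifold hypothesis, which is honestly only local and generic: $\mathcal{M}$ fails to be smooth at matrices with repeated eigenvalues, and $\Omega_n$ fails to be a manifold on its boundary faces (where some entries vanish). A clean proof therefore requires either a non-degeneracy assumption (distinct $\lambda_i$'s and an interior intersection point), or an extension of Lewis's theorem to stratified or subanalytic sets. Within the present framework the corollary should be read as a conditional statement: whenever the generic transversality of Theorem (Generic transversality) can be promoted to actual transversality at the candidate limit point, the algorithm converges linearly.
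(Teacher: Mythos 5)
Your proposal follows essentially the same route as the paper: the paper's entire proof is the single sentence that the result follows from the Generic Transversality theorem and the Linear Convergence theorem by taking $M=\mathcal{M}$ and $N=\Omega_{n}$, which is exactly the reduction you carry out. You go further than the paper by making explicit the two hypotheses that reduction actually requires (local manifold structure of both sets near $\overline{x}$, and transversality there), and the caveats you raise are genuine: $\mathcal{M}$ fails to be a smooth manifold at matrices with repeated eigenvalues, $\Omega_{n}$ fails to be a manifold where the nonnegativity constraints are active, and the Generic Transversality theorem only asserts transversality of $\mathcal{M}$ and $\Omega_{n}-v$ for almost every $v$, not for $v=0$ — none of which the paper's one-line proof addresses. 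So your reading of the corollary as a conditional statement (valid under a non-degeneracy assumption on $\lambda$, an interior intersection point, and actual rather than generic transversality) is a fair and arguably more honest account of what this argument establishes.
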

\begin{proof}
	By Theorem~\ref{th:Semi-alg_transver} and Theorem~\ref{th:Linear_Conv_manifolds}, the proof is straightforward by taking $M=\mathcal{M}$ and $N=\Omega_{n}.$
\end{proof}

Notice that from a computational perspective, we learned how to compute the projections onto the sets $\mathcal{M}$ and $\Omega_{n}$ in the previous sections, and so an intersection point of $\mathcal{M}\cap{\Omega}_n$ can be viewed as the common fixed point of the projection operator for each of the sets. An equivalent statement can be achieved by the following theorem.
\begin{theorem}\label{th:fixedPt}
	A point $x$ is an intersection point of the sets $\mathcal{M}$ and ${\Omega}_n$ if and only if $x$ is a common fixed point of all the projection operators for those sets, i.e.,
	\begin{equation*}
		x=P_{\mathcal{M}}\left(x\right)=P_{\Omega_{n}}\left(x\right).
	\end{equation*}
\end{theorem}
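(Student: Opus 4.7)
The plan is to prove each direction by appealing only to the defining property of the projection $P_C$: namely, that $P_C(x)$ is an element of $C$ realizing the minimal distance from $x$ to $C$. Since both $\mathcal{M}$ and $\Omega_n$ are closed (nonempty) subsets of the ambient Hilbert space, $P_{\mathcal{M}}(x)$ and $P_{\Omega_n}(x)$ are well defined in the sense used throughout Section~2 (with the proviso that uniqueness is not required for $\mathcal{M}$).

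For the forward implication, I would assume $x\in\mathcal{M}\cap\Omega_n$. Then $x$ itself is a feasible candidate for both projections, and $\|x-x\|=0$, which is trivially the minimum value of the distance functional $y\mapsto\|x-y\|$ over any set containing $x$. Hence $x$ itself satisfies the defining inequality $\|x-x\|\le\|x-y\|$ for every $y$ in $\mathcal{M}$ (respectively $\Omega_n$), so $x=P_{\mathcal{M}}(x)$ and $x=P_{\Omega_n}(x)$.

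For the reverse implication, I would use the fact that any projection of a point onto a set is, by definition, a member of that set. So the equality $x=P_{\mathcal{M}}(x)$ immediately yields $x\in\mathcal{M}$, and likewise $x=P_{\Omega_n}(x)$ yields $x\in\Omega_n$. Combining these gives $x\in\mathcal{M}\cap\Omega_n$, which is the desired conclusion.

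There is essentially no obstacle here: the statement is a general and elementary property of projection operators onto closed sets, and convexity plays no role (in particular, the non-convexity of $\mathcal{M}$ causes no difficulty, since we never need uniqueness of the projection — we only need that \emph{some} projection equals $x$, which is automatic whenever $x$ is already in the set). The only point worth flagging is that one should state explicitly the convention adopted in Section~2 that $y=P_C(x)$ means $y\in C$ with $\|x-y\|\le\|x-c\|$ for all $c\in C$, so that the membership ``$P_C(x)\in C$'' used in the backward direction is part of the definition rather than an extra assertion.
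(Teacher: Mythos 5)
Your argument is correct. The paper actually states Theorem~\ref{th:fixedPt} without any proof (it is treated as an immediate consequence of the definition of a projection onto a closed set), and your two-line argument --- $x\in C$ forces $\|x-x\|=0$ to be the minimal distance, so $x$ is a projection of itself; conversely $P_C(x)\in C$ by definition --- is exactly the intended justification. The only point worth adding is that, since the theorem asserts $x$ is a fixed point of \emph{all} the (possibly non-unique) projections onto the non-convex set $\mathcal{M}$, one should note that when $x\in\mathcal{M}$ any projection $y$ satisfies $\|x-y\|\le\|x-x\|=0$ and hence equals $x$, so the projection is in fact unique there; this is a one-line addition, not a gap.
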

The following corollary is an immediate consequence the above theorem.
\begin{corollary}
	Let $x$ be an intersection point of the sets $\mathcal{M}$ and ${\Omega}_n.$ Then $x$ is a fixed point of the combined projection operator $P_{\Omega_{n}}P_{\mathcal{M}},$ i.e.,
	\begin{equation*}
		x=P_{\Omega_{n}}P_{\mathcal{M}}\left(x\right).
	\end{equation*}
\end{corollary}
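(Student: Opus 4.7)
The plan is to derive this corollary as an immediate consequence of Theorem~\ref{th:fixedPt}. Assuming $x \in \mathcal{M} \cap \Omega_n$, the theorem supplies the two identities $x = P_{\mathcal{M}}(x)$ and $x = P_{\Omega_n}(x)$, i.e.\ $x$ is simultaneously a best approximation of itself in each of the two sets. From here the conclusion reduces to a one-line substitution in the composition: plug $P_{\mathcal{M}}(x) = x$ into the left-hand side to get
\begin{equation*}
P_{\Omega_n} P_{\mathcal{M}}(x) \;=\; P_{\Omega_n}(x) \;=\; x,
\end{equation*}
where the last equality uses the second identity.

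The only conceptual subtlety worth flagging is that $\mathcal{M}$ is not convex, so $P_{\mathcal{M}}$ is in general set-valued; the identity $x = P_{\mathcal{M}}(x)$ should therefore be read in the sense introduced right after Theorem~\ref{th:Kolmog_Proj}, namely that $x$ is a (not necessarily unique) best approximation of itself in $\mathcal{M}$. This is trivially true whenever $x \in \mathcal{M}$, since $\|x - x\| = 0$ is the minimum of $\|x - m\|$ for $m \in \mathcal{M}$, and the composition argument above goes through for any choice of element of the projection set. By contrast, $\Omega_n$ is closed and convex, so $P_{\Omega_n}$ is single-valued by Theorem~\ref{th:Kolmog_Proj} and no ambiguity arises on that side.

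There is no real obstacle here: once Theorem~\ref{th:fixedPt} is available the corollary is essentially a rewriting for the composed operator, and the argument uses neither continuity nor non-expansiveness of the projections, only the characterization of intersection points as common fixed points.
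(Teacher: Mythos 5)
Your proof is correct and matches the paper's intent exactly: the paper states this corollary as an immediate consequence of Theorem~\ref{th:fixedPt} without writing out the substitution $P_{\Omega_{n}}P_{\mathcal{M}}(x)=P_{\Omega_{n}}(x)=x$, which is precisely your one-line argument. Your remark on the set-valuedness of $P_{\mathcal{M}}$ is a careful addition but does not change the route.
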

It is worthy to note that the above corollary shows that a fixed point of the combined projection operator is a necessary condition for the intersection point of the sets $\mathcal{M}$ and $\Omega_{n},$ but not vice versa. For more on this issue,  see \cite[Section~5]{Orsi}.
\section{Numerical experiments}
We have tested both SDIEP and DIEP on various dimensions and our methods look promising. For demonstration purpose, we only consider the cases when $n=4$ and $7.$ The following calculations were performed after implementing our algorithms via Maple 18. For illustration purposes, the detailed analysis of the result in the first case as well as the Maple codes can be found in the appendices below.
\subsection{For $n=4$}
\subsubsection{Real Spectrum}	For the list  $\lambda = \left(1,\frac{3}{4},-\frac{1}{4},-\frac{1}{2}\right)$,
we obtained  the realizing matrix
$\small{ M=\left(\begin{array}{cccc}
0.101196761727949&0.135883686457413&0.359487108073623&0.403432443741015\\
0.135883686457413&0.804884261260186&0.0371665534062483&0.0220654988761529\\
0.359487108073623&0.0371665534062483&0.0613816290745809&0.541964709445548\\
0.403432443741015&0.0220654988761529&0.541964709445548&0.0325373479372842
		\end{array}\right)}$
	which gives a numerical solution of SDIEP.
	\subsubsection{Complex Spectrum}	Similarly, for the given list of complex numbers $\lambda = \left(1, \frac{1}{3},\frac{1}{6}+\frac{\sqrt{519\times 10^{-5}}}{6}i,\frac{1}{6}-\frac{\sqrt{519\times 10^{-5}}}{6}i\right),$
	we found the following realizing matrix\\
$\small{
M=\left(\begin{array}{cccc}
			0.0648048716687098& 0.935195128331290&
			0& 0\\  0.340130351957920& 0.0640767670903937&
			0.294236123300947& 0.301556757650739\\
			0.304564892725389& 0.000427653198872363& 0.433841286483687&
			0.261166167592051\\  0.290499883647981&
			0.000300451379443567& 0.271922590215366& 0.437277074757210
			\end{array}\right)}
$
		which gives a numerical solution to DIEP.
		\subsection{For $n=7$}
		For the list  $\lambda = \left(1, 0.85, 0.75, 0.5,-0.25,-0.35, -0.5\right),$  we have found the following realizing matrix
	\begin{equation*}
\hspace*{-1cm}		M=\left(\begin{tabular}{ccc}
		$M_1$&$M_2$&$M_3$\\$M_4$&$M_5$&$M_6$
		\end{tabular}\right), \textnormal{ where}
	\end{equation*}
$$
\hspace{-2cm}
	  M_1=\left(\begin{array}{ccc}
				0.867163231154133& 0.0385836434072440& 0.0226163301545610\\
				0.0385836434072441&0.782190028144753& 0.0354563052552376\\
				0.0226163301545610& 0.0354563052552377&0.349473772544678
			\end{array}\right) \in M_3\left(\mathbb{R}\right), $$
$$ \hspace{-2cm}
			M_2=\left(\begin{array}{cc}
				0.0374533710504067& 0.0220365385021468\\
				0.0117398898208207&0.0397490544343596\\
				0.0832079216768592& 0.00664932302701010
			\end{array}\right) \in M_{3,2}\left(\mathbb{R}\right),$$
 $$\hspace{-2cm}
	M_3=\left(\begin{array}{cccc}
	 0&0.0121468857315085\\
	0.0116241884359759&0.0806568905016094\\
0.478117947670726&0.0244783996709279
\end{array}\right) \in M_{3,2}\left(\mathbb{R}\right),$$

		$$ \hspace{-1cm}	M_4=\left(\begin{array}{cccc}
				0.0374533710504067& 0.0117398898208207& 0.0832079216768592\\
				0.0220365385021468& 0.0397490544343597&0.00664932302701007\\
				0&0.0116241884359759& 0.478117947670726\\
				0.0121468857315085& 0.0806568905016095& 0.0244783996709279
			\end{array}\right)\!\in M_{4,3}\left(\mathbb{R}\right),$$
		$$  M_5= \left(\begin{array}{cc}
				0&0.371565767122184\\
				0.371565767122184& 0.0011729681564368\\
				0.100748927207363&0.240450792235105\\
				0.395284123122367& 0.318375556522757
			\end{array}\right) \in M_{4,2}\left(\mathbb{R}\right),$$
		$$	M_6= \left(\begin{array}{cc}
		 0.100748927207363& 0.395284123122367\\
		0.240450792235105& 0.318375556522757\\
	 0& 0.169058144450830\\
	 0.169058144450830& 0
		\end{array}\right) \in M_{4,2}\left(\mathbb{R}\right).$$

		\section*{Declaration of Competing Interest}
		The authors declare that they have no competing interests.
		\section*{References}
		
\appendix
\section{Nonnegative Matrix with Eigenvalues $\lambda=\left(1, \frac{3}{4}, -\frac{1}{4}, -\frac{1}{2}\right)$}	
with(LinearAlgebra): with(ListTools): with(combinat):
\\
MakeOh$\,\defeq$ \textbf{proc}$(x)$ \\
\textbf{if} $x < 0$ \textbf{then} $x-x$\\
\textbf{elif} $x>0$ \textbf{then} $x$\\
\textbf{else} $x-x$\\
\textbf{end if}\\
\textbf{end proc};	\\
$A\defeq$ RandomMatrix(4, 4, generator $= 0\,.. \,99$); \#Starting Point
\begin{equation*}
A\defeq\left[\begin{array}{cccc}
64&70& 85& 9\\
52& 10& 6& 19\\
23& 28& 0& 31\\
61& 22& 66& 0
\end{array}\right]
\end{equation*}
$Y\defeq \frac{1}{2}\cdot\left(A + Transpose(A)\right):$\\
$E, V\defeq$ Eigenvectors$(Y):$\\
$X\defeq V\cdot$DiagonalMatrix$\left(\left[1, \frac{3}{4}, -\frac{1}{4}, -\frac{1}{2}\right], 4, 4\right)\cdot$MatrixInverse(evalf$(V, 100)$):\\
$X\defeq$ simplify$(\frac{1}{2}\cdot($evalf$(X, 100)+$Transpose(evalf$(X, 100))))$;\\
\begin{equation*}
X\defeq\left[\begin{array}{cccc}
0.4684022410& 0.5592546853& 0.2687477422& 0.08977571410\\
0.5592546853& 0.2084922562& -0.09638457078& -0.07803895284\\
0.2687477422&-0.09638457078& 0.1212531543& 0.6082030224\\
0.08977571410& -0.07803895284& 0.6082030224& 0.2018523484
\end{array}\right]
\end{equation*}
$Y\defeq$ map(MakeOh, map(Re, $X$));\\
\begin{equation*}
Y\defeq\left[\begin{array}{cccc}
		0.4684022410& 0.5592546853& 0.2687477422& 0.08977571410\\
		0.5592546853& 0.2084922562& 0.& 0.\\
		0.2687477422& 0.& 0.1212531543& 0.6082030224\\
		0.08977571410& 0.& 0.6082030224& 0.2018523484
	\end{array}\right]
\end{equation*}
simplify(MatrixNorm$(X - Y,$ Frobenius));
\begin{equation*}
	0.1753856530
\end{equation*}
\textbf{while} simplify(MatrixNorm$(X - Y,$ Frobenius))$>10^{-9}$ \textbf{do}\\
$E, V\defeq$ Eigenvectors$(Y)$:\\
$X\defeq V\cdot$DiagonalMatrix$\left(\left[1, \frac{3}{4}, -\frac{1}{4}, -\frac{1}{2}\right], 4, 4\right)\cdot$MatrixInverse$(V)$:\\
X $\defeq$ simplify($\frac{1}{2}\cdot$($X +$ Transpose$(X)$)):\\
$Y\defeq$ map(MakeOh, map(Re, $X$)):\\
\textbf{end do}:\\
 simplify(MatrixNorm$(X - Y,$ Frobenius));
 \begin{equation*}
 1.132499970\times10^{-10}
 \end{equation*}
 $Y$;
\begin{equation*}
\left[\begin{array}{cccc}
0.2062464680& 0.1778473020& 0.3712215523& 0.4324230299\\
0.1778473020& 0.7937535321& 0& 0\\
0.3712215523& 0& 0& 0.4947565885\\
0.4324230299& 0& 0.4947565885& 0
\end{array}\right]
\end{equation*}
Eigenvalues$(Y);$
\begin{equation*}
\left[\begin{array}{c}
1.0000000001027831\\ 0.749999999935391\\ -0.2500000000352335\\ -0.49999999990294086\end{array}\right]
\end{equation*}
\section{Symmetric Doubly Stochastic Matrix with Eigenvalues $\lambda=\left(1, \frac{3}{4}, -\frac{1}{4}, -\frac{1}{2}\right)$}
with(LinearAlgebra): with(ListTools): with(combinat):
\\
MakeOh$\,\defeq$ \textbf{proc}$(x)$ \\
\textbf{if} $x < 0$ \textbf{then} $x-x$\\
\textbf{elif} $x>0$ \textbf{then} $x$\\
\textbf{else} $x-x$\\
\textbf{end if}\\
\textbf{end proc};	
\begin{equation*}
A\defeq\left[\begin{array}{cccc}
	0.2062464680& 0.1778473020& 0.3712215523& 0.4324230299\\
	0.1778473020& 0.7937535321& 0& 0\\
	0.3712215523& 0& 0& 0.4947565885\\
	0.4324230299& 0& 0.4947565885& 0
\end{array}\right]
\end{equation*}
lambda$\,\defeq \left[1, \frac{3}{4}, -\frac{1}{4}, -\frac{1}{2}\right];$
\begin{equation*}
\lambda\defeq\left[1, \frac{3}{4}, -\frac{1}{4}, -\frac{1}{2}\right]
\end{equation*}
$Y\defeq \frac{1}{2}\cdot\left(A + A^{\%T}\right);$
\begin{equation*}
\hspace*{-2cm}	Y\defeq\left[\begin{array}{cccc}
0.206246468000000&0.177847302000000&0.371221552300000&0.432423029900000\\
0.177847302000000&0.793753532100000&0.&0.\\
0.371221552300000&0.&0.&0.494756588500000\\
0.432423029900000&0.&0.494756588500000&0.
	\end{array}\right]
\end{equation*}
$T, U\defeq$ Eigenvectors$(Y)$;
\begin{equation*}
\hspace*{-3cm}\begin{array}{cc}
&T, U\defeq \left[\begin{array}{c}
1.00000000010278\\ 0.749999999935391\\-0.250000000035233\\-0.499999999902941
\end{array}\right],\\
&\left[\begin{array}{cccc}
0.576445412257448& 0.210911876437704& 0.780276165192027& -0.119983220068859\\
0.497071597458245& -0.857304686695529& -0.132952853831712& 0.0164936299277058\\
0.446648531011842& 0.326847445760011& -0.518540523285432& -0.651760356769258\\
0.470250575153941& 0.337217439862715& -0.323432172799954& 0.748692476533406
\end{array}\right]\end{array}
\end{equation*}
$W\defeq$ IdentityMatrix$(4)-$ Matrix$\left(\left[\left[\frac{1}{4}, \frac{1}{4}, \frac{1}{4}, \frac{1}{4}\right], \left[\frac{1}{4}, \frac{1}{4}, \frac{1}{4}, \frac{1}{4}\right], \left[\frac{1}{4}, \frac{1}{4}, \frac{1}{4}, \frac{1}{4}\right], \left[\frac{1}{4}, \frac{1}{4}, \frac{1}{4}, \frac{1}{4}\right]\right]\right)$;
\begin{equation*}
W\defeq\left[\begin{array}{cccc}
\frac{3}{4}& -\frac{1}{4}& -\frac{1}{4}& -\frac{1}{4}\\
-\frac{1}{4}& \frac{3}{4}& -\frac{1}{4}& -\frac{1}{4}\\
-\frac{1}{4}& -\frac{1}{4}& \frac{3}{4}& -\frac{1}{4}\\
-\frac{1}{4}& -\frac{1}{4}& -\frac{1}{4}& \frac{3}{4}
\end{array}\right]
\end{equation*}
Proj$_{-}$Omega$\,\defeq$ \textbf{proc}$(M$::Matrix$(4, 4), n$::nonnegint)\\
\textbf{local} $X\defeq$ \textbf{proc}$(n$::nonnegint)\\
\textbf{local} $W\defeq$ IdentityMatrix$(4)-$ Matrix$\left(\left[\left[\frac{1}{4}, \frac{1}{4}, \frac{1}{4}, \frac{1}{4}\right], \left[\frac{1}{4}, \frac{1}{4}, \frac{1}{4}, \frac{1}{4}\right], \left[\frac{1}{4}, \frac{1}{4}, \frac{1}{4}, \frac{1}{4}\right], \left[\frac{1}{4}, \frac{1}{4}, \frac{1}{4}, \frac{1}{4}\right]\right]\right)$;\\
\textbf{if} $n = 0$ \textbf{then} \\
\textbf{return} $M;$\\
\textbf{else}\\
\textbf{return} $W\cdot Y(n-1)\cdot W +$ Matrix$\left(\left[\left[\frac{1}{4}, \frac{1}{4}, \frac{1}{4}, \frac{1}{4}\right], \left[\frac{1}{4}, \frac{1}{4}, \frac{1}{4}, \frac{1}{4}\right], \left[\frac{1}{4}, \frac{1}{4}, \frac{1}{4}, \frac{1}{4}\right], \left[\frac{1}{4}, \frac{1}{4}, \frac{1}{4}, \frac{1}{4}\right]\right]\right)$;\\
\textbf{end if};\\
\textbf{end proc};\\
\textbf{local} $Y\defeq$ \textbf{proc}$(n$::nonnegint)\\
\textbf{if} $n = 0$ \textbf{then}\\
\textbf{return} $M;$\\
\textbf{else}\\
\textbf{return} map(MakeOh, $X(n)-$ Id$_{-}$two$(n - 1, X(n - 1), Y(n - 1)))$;\\
\textbf{end if};\\
\textbf{end proc};\\
\textbf{local} Id$_{-}$one$\defeq$ \textbf{proc}$(n$::nonnegint, $Q$::Matrix$(4, 4)$, $R$::Matrix$(4, 4)$)\\
\textbf{if} $n = 0$ \textbf{then}\\
\textbf{return} $0$;\\
\textbf{else}\\
\textbf{return} $Q - R +$ Id$_{-}$one$(n - 1, X(n - 1), Y(n - 1))$;\\
\textbf{end if};\\
\textbf{end proc};\\
Id$_{-}$two$\defeq$ \textbf{proc}$(n$::nonnegint,$Q$::Matrix$(4, 4)$, $R$::Matrix$(4, 4)$)\\
\textbf{if} $n = 0$ \textbf{then}\\
\textbf{return} $0$;\\
\textbf{else}\\
\textbf{return} $R - Q +$ Id$_{-}$two$(n - 1, X(n - 1), Y(n - 1))$;\\
\textbf{end if};\\
\textbf{end proc};\\
\textbf{return} $Y(n), X(n);$\\
\textbf{end proc}\\
ProjOn$_{-}M\defeq$ \textbf{proc}$(T$::Matrix$(4, 4)$)::Matrix$(4, 4)$;\\
\textbf{global} $N\defeq$Matrix$(4,4)$;\\
\textbf{local} $l, m, k,$\\
\textbf{local} $L\defeq$ Vector$(4!)$, $i\defeq 0$, $j\defeq 0$, $P\defeq$ [seq$(0, i = 1\,..\,4)$];\\
\textbf{local} $H\defeq T$;\\
$P\defeq$ permute([\textquotesingle\$\textquotesingle$(1\,.. \,4)$]);\\
\textbf{for $j$ from $1$ by $1$ to $4!$ do}\\
$L[j]\defeq L[j] +$ add(abs(lambda$[i]- T[P[j, i], P[j, i]])^2, i = 1\,..\,4$);\\
\textbf{end do};\\
$j\defeq$ Search($\min(L), L$);\\
\textbf{for $i$ from $1$ by $1$ to $4$ do}\\
$N[P[j, i], P[j, i]]\defeq$ lambda$[i]$;\\
\textbf{end do};\\
\textbf{for $l$ from $1$ to $4$ do}\\
\textbf{for $m$ from $l + 1$ to $4$ do} \\
$N[l, m]\defeq T[l, m]$;\\
\textbf{end do}; \\
\textbf{end do};\\
\textbf{for $l$ from $1$ to $4$ do}\\
\textbf{for $k$ from $l - 1$ by $-1$ to $1$ do}\\
$N[l, k] \defeq T[l, k]$;\\
\textbf{end do};\\
\textbf{end do}; \\
\textbf{return} $N$;\\
\textbf{end proc}\\
$X\defeq U\cdot$DiagonalMatrix([lambda], $4, 4)\cdot U^*$;
\begin{equation*}
\hspace*{-2.8cm}X\defeq\left[\begin{array}{cccc}
0.206246467988774& 0.177847301955404& 0.371221552256145& 0.432423029876560\\
0.177847301955404& 0.793753532122686&  -0.3745155187\times10^{-10}& -0.4238765232\times10^{-10} \\
0.371221552256145&-0.3745154320\times10^{-10}& -0.4535897046\times10^{-10}& 0.494756588538803\\
 0.432423029876560&-0.4238763150\times10^{-10}& 0.494756588538803&-0.6610203878\times10^{-10}
\end{array}\right]
\end{equation*}
$B\defeq W\cdot$map(Re, $X$)$\cdot W+$Matrix$\left(\left[\left[\frac{1}{4}, \frac{1}{4}, \frac{1}{4}, \frac{1}{4}\right], \left[\frac{1}{4}, \frac{1}{4}, \frac{1}{4}, \frac{1}{4}\right], \left[\frac{1}{4}, \frac{1}{4}, \frac{1}{4}, \frac{1}{4}\right], \left[\frac{1}{4}, \frac{1}{4}, \frac{1}{4}, \frac{1}{4}\right]\right]\right)$;
\begin{equation*}
\hspace*{-2cm}B\defeq\left[\begin{array}{cccc}
0.109408351018717& 0.135043564505004& 0.354823488127274& 0.400724596349005\\
0.135043564505004& 0.804984174191945& 0.0376363153533354& 0.0223359459497152\\
0.354823488127273& 0.0376363153533354& 0.0640419886669564& 0.543498207852435\\
0.400724596349005& 0.0223359459497152& 0.543498207852435&0.0334412498488450
\end{array}\right]
\end{equation*}
$n\defeq 1;$\\
\begin{equation*}
	n\defeq 1
\end{equation*}
$M\defeq$ map(MakeOh, map(Re, $B$));
\begin{equation*}
	\hspace*{-2cm}M\defeq\left[\begin{array}{cccc}
		0.109408351018717& 0.135043564505004& 0.354823488127274& 0.400724596349005\\
		0.135043564505004& 0.804984174191945& 0.0376363153533354& 0.0223359459497152\\
		0.354823488127273& 0.0376363153533354& 0.0640419886669564& 0.543498207852435\\
		0.400724596349005& 0.0223359459497152& 0.543498207852435&0.0334412498488450
	\end{array}\right]
\end{equation*}
\textbf{while} simplify(MatrixNorm($X-B$,Frobenius))$>10^{-14} $ \textbf{do}\\
$T,U\defeq\;$SchurForm($B,$output$=[$\textquotesingle $T$\textquotesingle,\textquotesingle$Z$\textquotesingle$]$);\\ $X\defeq U\cdot$ProjOn$_{-}M(T)\cdot U^*$;\\
\textbf{while} simplify(MatrixNorm($M - B,$ Frobenius))$>10^{-14}$ \textbf{do}\\
\textbf{while} simplify(MatrixNorm(Proj$_{-}$Omega$(M, n)[1]-$Proj$_{-}$Omega$(M, n)[2],$ Frobenius)) $>10^{-14}$ \textbf{do}\\
$n\defeq n + 1$;\\
$B\defeq$ Proj$_{-}$Omega$(M, n)[1]$;\\
\textbf{end do};\\
$M\defeq$ map(MakeOh, map(Re, $B$));\\
$n\defeq 1$;\\
\textbf{end do}: \\
\textbf{end do}: \\
B;
\begin{equation*}
\hspace*{-2.1cm}\left[\begin{array}{cccc}
0.101196761727949&0.135883686457413&0.359487108073623&0.403432443741015\\
0.135883686457413&0.804884261260186&0.0371665534062483&0.0220654988761529\\
0.359487108073623&0.0371665534062483&0.0613816290745809&0.541964709445548\\
0.403432443741015&0.0220654988761529&0.541964709445548&0.0325373479372842
\end{array}\right]
\end{equation*}
simplify(MatrixNorm$(X - B,$ Frobenius));
\begin{equation*}
8.50191497400929368\times 10^{-16}
\end{equation*}
Eigenvalues$(B);$
\begin{equation*}
	\left[\begin{array}{c}
		1.00000000000000\\ 0.750000000000000\\ -0.250000000000000\\ -0.500000000000000\end{array}\right]
\end{equation*}
$B[1, 1] + B[1, 2] + B[1, 3] + B[1, 4];$
\begin{equation*}
	1
\end{equation*}
$B[2, 1] + B[2, 2] + B[2, 3] + B[2, 4];$
\begin{equation*}
	1
\end{equation*}
$B[3, 1] + B[3, 2] + B[3, 3] + B[3, 4];$
\begin{equation*}
	1
\end{equation*}
$B[4, 1] + B[4, 2] + B[4, 3] + B[4, 4];$
\begin{equation*}
	1
\end{equation*}
$B[1, 1] + B[2, 1] + B[3, 1] + B[4, 1];$
\begin{equation*}
	1
\end{equation*}
$B[1, 2] + B[2, 2] + B[3, 2] + B[4, 2];$
\begin{equation*}
	1
\end{equation*}
$B[1, 3] + B[2, 3] + B[3, 3] + B[4, 3];$
\begin{equation*}
	1
\end{equation*}
$B[1, 4] + B[2, 4] + B[3, 4] + B[4, 4];$
\begin{equation*}
	1
\end{equation*}

\begin{thebibliography}{99}
			\bibitem{Alberti} Alberti PM, Uhlmann A. Stochasticity and Partial Order: Doubly Stochastic Maps and Unitary Mixing, Reidel, Boston 1982.
			\bibitem{Atrianfar} Atrianfar H, Haeri A. Average Consensus in Networks of Dynamic Multi-agents with Switching Topology: Infinite Matrix Products, ISA Transactions, 2012; 51(4):522-530.
			\bibitem{Bauschke_2013}Bauschke HH, Luke DR,  Phan HM et al. Restricted Normal Cones and the Method of Alternating Projections: Theory. Set-Valued Var. Anal 2013; 21:431-473.
			\bibitem{dykstra}Boyle JP, Dykstra RL. A method for finding projections onto the intersections of convex sets in Hilbert spaces. In Advances in Order Restricted Statistical Inference, Lecture Notes in Statistics 1986; 37:28-47.
			\bibitem{MAP}Br\`egman LM. The method of successive projection for finding a common point of convex sets, Soviet Mathematics 1965; 6:688-692.
			\bibitem{Brockett} Brockett RW. Dynamical systems that sort lists, diagonalize matrices, and solve linear programming problems, Linear Algebra Appl.  1991; 146:79-91.
			
			\bibitem{Bullo} Bullo F, Cortes J, Martinez S. Distributed Control of Robotic Networks, Applied Mathematics Series, Princeton University Press; 2009.
			\bibitem{Cheney_59}Cheney W, Goldstein A. Proximity maps for convex sets, Proc. Amer. Math. Soc. 1959; 10:448-450.
			\bibitem{Chen_21} Chen M, Weng Z. Alternating projection method for doubly stochastic inverse eigenvalue problems with partial eigendata. Comp. Appl. Math. 2021; 40:164.
			\bibitem{Chu_1998} Chu MT, Guo G. A numerical method for the inverse stochastic spectrum problem, SIAM J. Matrix Anal. Appl. 1998; 19(4):1027-1039.
			\bibitem{Golub_05} Chu MT, Golub GH. Inverse Eigenvalue Problems: Theory, Algorithms, and Application, Oxford Science Publications, Oxford University Press; 2005.
			\bibitem{Douik_Graph} Douik A, Hassibi B. A Riemannian approach for graph-based clustering by doubly stochastic matrices, in Proc. IEEE Statistical Signal Process. Workshop 2018; 1:806-810.
			\bibitem{Douik_geo} Douik A, Hassibi B. Manifold optimization over the set of doubly stochastic matrices: A second-order geometry, IEEE Trans. Signal Process. 2019; 67(22):5761-5774.
			\bibitem{Drusvyatskiy_2015} Drusvyatskiy D,  Ioffe AD, Lewis AS. Transversality and alternating projections for nonconvex sets. Found. Comput. Math. 2015; 15(6):1637-1651.
			\bibitem{Egleston_2004}Egleston PD, Lenker TD,  Narayan SK. The nonnegative inverse eigenvalue problem, Linear Algebra Appl. 2004; 379:475-490.
			\bibitem{dykstra_algo}Dykstra RL. An algorithm for restricted least-squares regression, J. Amer. Statist. Assoc. 1983; 78:837-842.	
			\bibitem{Chu} Chu MT, Driessel KR. Constructing symmetric nonnegative matrices with prescribed eigenvalues by differential equations, SIAM J. Math. Anal. 1991; 22:1372-87.
			\bibitem{Converge_MAP} Fiorot JCh, Huard P. Composition and union of general algorithms of optimization, Mathematical Programming Study 1979; 10:69-85.
			\bibitem{Gaffke}Gaffke N, Mathar R. A cyclic projection algorithm via duality, Metrika 1989; 36:29-54.
			\bibitem{Golub} Golub GH, Van Loan CF. Matrix Computations Johns Hopkins Studies in the Mathematical Sciences; 2013.
			\bibitem{Robert_1980}Hardt RM. Semi-algebraic local-triviality in semi-algebraic mappings, Amer. J. Math. 1980; 102(2):291-302.	
			\bibitem{Hardy}Hardy GH, Littlewood JE, Polya G. Inequalities, Cambridge University Press, Cambridge; 1952.
			\bibitem{Halperin_62}Halperin I. The product of projection operators. Acta Sci. Math. (Szeged), 1962; 23:96-99.
			\bibitem{M_cpct_manifold}Helmke U, Moore JB. Optimization and Dynamical Systems, Springer-Verlag, London; 1994.
			\bibitem{Horn} Horn RA, Johnson CR. Matrix Analysis, Cambridge University Press; 1985.
			\bibitem{John_1981} Johnson CR. Row stochastic matrices similar to doubly stochastic matrices, Linear and Multilinear Algebra 1981; 10:113-130.
			\bibitem{Khoury} Khoury RN. Closest matrices in the space of generalized doubly stochastic matrices, journal of mathematical analysis and applications 1998; 222:562-568.
			\bibitem{Lewis_2008} Lewis AS, Malick J. Alternating projections on manifolds, Mathematics of Operations Research 2008; 33:216-234.
			\bibitem{Minc_88} Minc H. Non-negative matrices, Berlin Press, New York; 1988.
			\bibitem{M1} Mourad B. An inverse problem for symmetric doubly stochastic matrices, Inverse Problems 2003; 19:821-831.
			\bibitem{M2} Mourad B. A note on the boundary of the set where the decreasingly ordered spectra of symmetric doubly stochastic matrices lie,
               Linear Algebra Appl. 2006; 416:546-558
            \bibitem{M3} Mourad B. On a spectral property of doubly stochastic matrices and its application to their inverse eigenvalue problem, Linear Algebra Appl. 2012; 436:3400-3412.
			\bibitem{M4} Mourad B, Abbas H, Mourad A, Ghaddar A, Kaddoura I. An algorithm for constructing doubly stochastic matrices for the inverse eigenvalue problem, Linear Algebra Appl. 2013; 439:1382-1400.
 \bibitem{M5} Mourad B.    Generalization of some results concerning eigenvalues of a certain class of matrices and some applications
Linear and Multilinear Algebra 2013;  61;1234-1243.
			\bibitem{M6}Nader R, Mourad B, Bretto A, Abbas H. A note on the real inverse spectral problem for doubly stochastic matrices, Linear Algebra Appl. 2019; 569:206-240.
			\bibitem{Orsi} Orsi R. Numerical methods for solving inverse eigenvalue problems for nonnegative matrices, SIAM J. Matrix Anal. 2006; 28:190-212.
			\bibitem{Genetic} He MX, Petoukhov SV, Ricci PE. Genetic code, hamming distance and stochastic matrices. Bull. Math. Biol. 2006; 66:1405-1421.	
			\bibitem{Rammal_1} Rammal K, Mourad B, Abbas H,  Issa H. On the doubly stochastic realization of spectra. Banach J. Math. Anal. 16, 49 (2022). %https://doi.org/10.1007/s43037-022-00203-8.
			\bibitem{Rivaz} Rivaz A, Moghadam M, Zadeh S. Doubly Stochastic Interval Matrices, Cankaya University Journal of science and Engineering 2015; 12:12-19.	
			\bibitem{Seroul_00} Séroul R. Programming for Mathematicians, Berlin Springer-Verlag; 2000.
			\bibitem{Smo} Smoktunowicz A, Kozera R, Oderda G. Efficient numerical algorithms for constructing orthogonal generalized doubly stochastic matrices, Applied Numerical Mathematics 2019; 142:16-27.
\bibitem{Convergence_of_MAP}Tropp JA, Dhillon IS, Heath RW, Strohmer T. Designing structured tight frames via an alternating projection method, IEEE Trans. Inf. Theory 2008; 51:188-209.
			
			\bibitem{Von_Neumann_33} Von Neumann J. Functional Operators. Vol. II. The Geometry of Orthogonal Spaces. Annals of Math. Studies 22, 1950. Princeton University Press, Princeton, NJ. This is a reprint of mimeographed lecture notes first distributed in 1933.
			\bibitem{CG_Method_2020} Wang Y, Zhao Z, Bai ZJ. Riemannian Newton-CG methods for constructing a positive doubly stochastic matrix from spectral data; 2020.
			\bibitem{Zarantonello_71} Zarantonello EH. Projections on convex sets in Hilbert space and spectral theory. In E. Zarantonello, editor, Contributions to Nonlinear Functional Analysis 1971; 1:237-424.
		\end{thebibliography}
	\end{document}